\documentclass[twoside,11pt]{article}

%

\usepackage{jmlr2e}

\renewcommand{\c}{\mathbf{c}}

\newcommand{\beqn}{\begin{eqnarray}}
\newcommand{\beqnn}{\begin{eqnarray*}}
\newcommand{\eeqn}{\end{eqnarray}}
\newcommand{\eeqnn}{\end{eqnarray*}}
\newcommand{\GG}{\mathcal{G}}
 
\def \FF {\mathcal{F}}

\def \FF {\mathcal{F}}
\def \no {\Arrowvert}
\def \ind {\hbox{ 1\hskip -3pt I}}
\def \R {\mathbb{R}}

\def \P  {\mathbb{P}} 
\def \R {\mathbb{R}}
\def \E {\mathbb{E}}
\def \N {\mathbb{N}}



\ShortHeadings{Noisy quantization}{S. Loustau}
\firstpageno{1}

\begin{document}

\title{Anisotropic oracle inequalities\\in noisy quantization}

\author{\name S\'ebastien Loustau \email loustau@math.univ-angers.fr \\
       \addr LAREMA\\
       Universit\'e d'Angers\\
       2 Boulevard Lavoisier,\\
       49045 Angers Cedex, France}

\editor{}

\maketitle

\begin{abstract}
 The effect of errors in variables in quantization is investigated. We prove general exact and non-exact oracle inequalities with fast rates for an empirical minimization based on a noisy sample $Z_i=X_i+\epsilon_i,i=1,\ldots,n$, where $X_i$ are i.i.d. with density $f$ and $\epsilon_i$ are i.i.d. with density $\eta$. These rates depend on the geometry of the density $f$ and the asymptotic behaviour of the characteristic function of $\eta$.
 
This general study can be applied to the problem of $k$-means clustering with noisy data. For this purpose, we introduce a deconvolution $k$-means stochastic minimization which reaches fast rates of convergence under standard Pollard's regularity assumptions.
\end{abstract}

\begin{keywords}
Quantization, Deconvolution, Fast rates, Margin assumption, $k$-means clustering.
\end{keywords}

\section{Introduction}
        The goal of empirical vector quantization (\cite{graf}) or clustering (\cite{hartigan75}) is to replace data by an efficient and compact representation, which allows one to reconstruct the original observations with a certain accuracy. The problem was originated in signal processing and has many applications in cluster analysis or information theory. The statistical model could be described as follows. Given independent and identically distributed (i.i.d.) random variables $X_1,\ldots ,X_n$, with unknown law $P$ with density $f$ on $\R^d$ with respect to the Lebesgue measure, we want to choose a quantizer (or classifier) $g\in\mathcal{G}$, where $\mathcal{G}$ is the set of all possible quantizers (or classifiers). The measure of the accuracy of $g$ will be evaluate thanks to a distortion or risk given by, for some loss function $\ell$:
         \beqn
         \label{risk}
         R(g)=\E_P \ell(g,X)=\int_{\R^d}\ell(g,x)f(x)dx.
          \eeqn
The most investigated example of such a framework is probably cluster analysis, where given some integer $k\geq 2$, we want to build $k$ clusters of the set of observations $X_1,\ldots, X_n$. In this framework, a classifier $g\in\mathcal{G}$ assigns cluster $g(x)\in \{1,\ldots, k\}$ to an observation $x\in\R^d$.
         
         However, in many real-life situations, direct data $X_1,\ldots ,X_n$ are not available and measurement errors occur. Then, we observe only a corrupted sample $Z_i=X_i+\epsilon_i,i=1,\ldots n$ with noisy distribution $\tilde{P}$, where $\epsilon_1,\ldots, \epsilon_n$ are i.i.d. independent of $X_1,\ldots , X_n$ with density $\eta$. The problem of noisy empirical vector quantization or noisy clustering is to represent compactly and efficiently the measure $P$ when a contaminated empirical version $Z_1,\ldots ,Z_n$ is observed. This problem is a particular case of inverse statistical learning (see \cite{loustau12}), and is known to be an inverse problem. To our best knowledge, it has not been yet considered in the literature. This paper tries to fill this gap by giving a theoretical study of this problem. The construction of an algorithm to deal with clustering from a noisy dataset will be the core of a future paper.\\
         
A quiet natural habit in statistical learning is to endow clustering or empirical vector quantization into the general and extensively studied problem of empirical risk minimization (see \citet{vapnik2000},\citet{empimini},\citet{kolt}). This is exactly the guiding thread of this contribution. For this purpose, given a class of classifier or quantizer $\mathcal{G}$ (possibly infinite-dimensional space), let us consider a loss function $\ell:\mathcal{G}\times \R^d$ where $\ell(g,x)$ measures the loss of $g$ at point $x$. In such a framework, given data $X_1,\ldots ,X_n$, it is extremely standard to consider an empirical risk minimizer (ERM) defined as:
\beqn
\label{erm}
\hat{g}_n\in\arg\min_{g\in\mathcal{G}}\frac{1}{n}\sum_{i=1}^n\ell(g,X_i).
\eeqn
Since the pioneer's work of Vapnik, many authors have investigated the statistical performances of (\ref{erm}) in such a generality. We describe below two possible examples that fall into the specific problem of clustering or empirical quantization. \\

\begin{example}[\textsc{The $k$-means clustering problem}]
The finite dimensional clustering problem deals with the construction of a vector $\mathbf{c}= (c_1, \ldots, c_k)\in\mathbb{R}^{dk}$ to represent efficiently with $k\geq 1$ centers a set of observations $X_1,\ldots, X_n\in\R^d$. For this purpose, it is standard to consider the loss function $\gamma:\R^{dk}\times \R^d$ defined as:
$$
\gamma(\mathbf{c},x):=\min_{j=1,\ldots k}\no x-c_j\no^2.
$$
In this case, the empirical risk minimizer is given by $\hat{c}_n=\arg\min\sum_{i=1}^n\min_{j=1,\ldots k}\no X_i-c_j\no^2$ and is known as the popular $k$-means (\citet{pollard81},\citet{pollard82}).
\end{example}
\begin{example}[\textsc{Learning principal curves}] 
Another possible example is to consider quantization with principal curves (see \citet{fisherbiau}). In the definition of \cite{kklz}, a principal curve can be defined as the minimizer of the least-square distortion:
$$
W(g)=\E_P \inf_t \no X-g(t)\no^2,
$$
over a collection of parameterized curves $g:t\mapsto (g_1(t),\ldots,g_d(t))$. Principal curves can be useful in a wide range of statistical learning or data mining problems, such as speech recognition, social sciences or geology (see \cite{fisherbiau} and the references therein). As in (\ref{erm}), we can minimize the empirical least-square distortion $W_n(g)$, namely the distortion integrated with respect to the empirical measure.
\end{example}
%

In this paper, we propose to adopt a comparable strategy in the presence of noisy measurements. Since we observe a corrupted sample $Z_i=X_i+\epsilon_i$, $i=1,\ldots , n$, the empirical risk minimization (\ref{erm}) is not available. However, we can introduce a deconvolution step in the estimation procedure by constructing a kernel deconvolution estimator of the density $f$ of the form:
\beqn
\label{dke}
\hat{f}_\lambda(x)=\frac{1}{n}\sum_{i=1}^n\frac{1}{\lambda}\mathcal{K}_\eta\left(\frac{Z_i-x}{\lambda}\right),
\eeqn
where $\mathcal{K}_\eta$ is a deconvolution kernel and $\lambda=(\lambda_1,\ldots, \lambda_d)\in\R_d^+$ is a regularization parameter (see Section \ref{s:decerm} for details). With a slight abuse of notations, we write in (\ref{dke}), for any $x=(x_1,\ldots, x_d), Z_i=(Z_{1,i},\ldots,Z_{d,i})\in\R^d$:
$$
\frac{1}{\lambda}\mathcal{K}_\eta\left(\frac{Z_i-x}{\lambda}\right)=\frac{1}{\Pi_{i=1}^d\lambda_i}\mathcal{K}_\eta\left(\frac{Z_{1,i}-x_1}{\lambda_1},\ldots,\frac{Z_{d,i}-x_d}{\lambda_d}\right).
$$
Given this estimator, we construct an empirical risk by plugging (\ref{dke}) into the true risk (\ref{risk}) to get
a so-called deconvolution empirical risk minimization. The idea was originated in \cite{pinkfloyds} for discriminant analysis. To fix some notations, in this paper, a solution of this stochastic minimization can be written:
\beqn
\label{dermintro}
\hat{g}_n^\lambda\in\arg\min_{g\in\mathcal{G}}R_n^\lambda(g),\mbox{ where }R_n^\lambda(g)=\frac{1}{n}\sum_{i=1}^n\ell_\lambda(g,Z_i).
\eeqn
Section \ref{s:decerm} is devoted to the detailled construction of the deconvolution empirical risk $R_n^\lambda(\cdot)$, throught the loss $\ell_\lambda(g,\cdot)$. 

The purpose of this work is to study the statistical performances of $\hat g_n^\lambda$ in (\ref{dermintro}) in terms of oracle inequalities. On the one hand, we study the theoretical performances of $\hat{g}_n^\lambda$ thanks to exact oracle inequalities. An exact oracle inequality states that with high probability:
\beqn
\label{exact}
R(\hat{g}_n^\lambda)\leq \inf_{g\in\mathcal{G}}R(g)+r_{n,f,\eta}(\mathcal{G}),
\eeqn
where $r_{n,f,\eta}(\mathcal{G})\longrightarrow 0$ as $n\to\infty$. The residual term $r_{n,f,\eta}(\mathcal{G})$ is called the rate of convergence. It is a function of the complexity of $\mathcal{G}$, the behaviour of the density $f$, and the density of the noise $\eta$. In this paper, the behaviour of $f$  depends on two different assumptions : a margin assumption and a regularity assumption. The margin assumption is related to the difficulty of the problem whereas the regularity assumption will be expressed in terms of anisotropic H\"older spaces.\\
On the other hand, we propose non-exact oracle inequalities, i.e. the existence of a constant $\epsilon>0$, such that with high probability:
\beqn
\label{nonexact}
R(\hat{g}_n^\lambda)\leq (1+\epsilon)\inf_{g\in\mathcal{G}}R(g)+r^\star_{n,f,\eta}(\mathcal{G}).
\eeqn 
The main difference between (\ref{exact}) and (\ref{nonexact}) resides in the residuals which appears in the Right Hand Sides (RHS). As in \cite{lm09}, one of the message of this paper is to highlight the presence of faster rates of convergence (i.e. $r^\star_{n,f,\eta}=o(r_{n,f,\eta})$ as $n\to\infty$) for non-exact oracle inequalities. The cornerstone idea of these results resides in a bias-variance decomposition of the risk $R(\hat{g}_n^\lambda)$ as in \cite{loustau12}. However, in comparison to \cite{loustau12}, this work extend the previous results to unsupervised learning, non-exact oracle inequalities and to an anisotropic class of densities $f$. \\


The paper is organized as follows. In Section \ref{s:decerm}, we present the method and the main assumptions on the density $\eta$ (noise assumption), the kernel in (\ref{dke}) and the density $f$ (regularity and margin assumptions). We state the main theoretical results in Section \ref{s:mainresult}, which consists in exact and non-exact oracle inequalities with fast rates of convergence. It allows to recover recent results in the area of fast rates. These results are applied in Section \ref{application} for the problem of finite dimensional clustering with $k$-means. Section \ref{conclusion} concludes the paper with a discussion whereas Section \ref{proofs}-\ref{appendix} give detailled proofs of the main results.

\section{Deconvolution ERM}
\label{s:decerm}
\subsection{Construction of the estimator}
The deconvolution ERM introduced in this paper is originally due to \cite{pinkfloyds} in discriminant analysis (see also \cite{loustau12} for such a generality in supervised classification). The main idea of the construction is to estimate the true risk (\ref{risk}) thanks to a deconvolution kernel as follows. \\

Let us introduce $\mathcal{K}=\prod_{i=1}^d \mathcal{K}_j:\R^d \to \R$ a $d$-dimensional function defined as the product of $d$ unidimensional function $\mathcal{K}_j$. Besides, $\mathcal{K}$ (and also $\eta$) belongs to $L_2(\R^d)$ and admits a Fourier transform. Then, if we denote by $\lambda=(\lambda_1,\dots,\lambda_d)$ a set of (positive) bandwidths and by $\FF[\cdot]$ the Fourier transform, we define $\mathcal{K}_\eta$ as:
\begin{eqnarray}
\mathcal{K}_{\eta} & : & \R^d \to \R \nonumber \\
& & t \mapsto \mathcal{K}_\eta(t) = \FF^{-1}\left[ \frac{\FF[\mathcal{K}](\cdot)}{\FF[\eta](\cdot/\lambda)}\right](t).
\end{eqnarray}
Given this deconvolution kernel, we construct an empirical risk by plugging (\ref{dke}) into the true risk $R(g)$ to get
a so-called deconvolution empirical risk given by:
\beqn
\label{decer}
R_n^\lambda(g)=\frac{1}{n}\sum_{i=1}^n\ell_\lambda(g,Z_i)\mbox{ where }\ell_\lambda(g,Z_i)=\int_{K} \ell(g,x)\frac{1}{\lambda}\mathcal{K}_\eta\left(\frac{Z_i-x}{\lambda}\right)dx.
\eeqn
Note that for technicalities, we restrict ourselves to a compact set $K\subset\R^d$ and study the risk minimization (\ref{risk}) only in $K$. Consequently, in this paper, we only provide a control of the true risk (\ref{risk}) restricted to $K$, namely the truncated risk:
$$
R_{K}(g)=\int_K\ell(g,x)f(x)dx.
$$
This restriction has been considered in \cite{mammen} (or more recently in \cite{pinkfloyds}). It is important to note that when $f$ has compact support,  we can see coarsely that $R_{K}(g)=R(g)$ for great enough $K$. In the sequel, for simplicity, we write $R(\cdot)$ for the restricted loss defined above. The choice of $K$ is discussed in Section \ref{s:mainresult} and depends on the context. 
\subsection{Assumptions}
For the sake of simplicity, we restrict ourselves to moderately or midly ill-posed inverse problem as follows. We introduce the following noise assumption \textbf{(NA)}:
 \\
 
\noindent
\textbf{(NA)}: There exist $(\beta_1,\dots,\beta_d)'\in \R_+^d$ such that:
$$ \left| \mathcal{F}[\eta](t) \right| \sim \Pi_{i=1}^d|t_i|^{-\beta_i},  \mathrm{as} \ |t_i|\to +\infty,\,\forall i\in\{1,\ldots,d\}.$$
Moreover, we assume that $\mathcal{F}[\eta](t) \not = 0$ for all $t=(t_1,\ldots,t_d)\in \R^d$.\\

Assumption \textbf{(NA)} deals with the asymptotic behaviour of the characteristic function of the noise distribution. These kind of restrictions are standard in deconvolution problems for $d=1$ (see \citet{Fan,meister,butucea}). In this contribution, we only deal with $d$-dimensional mildly ill-posed deconvolution problems, which corresponds to a polynomial decreasing of $\mathcal{F}[\eta]$ in each direction. For the sake of brevity, we do not consider severely ill-posed inverse problems (exponential decreasing)or possible intermediates (e.g. a combination of polynomial and exponential decreasing functions). Recently, \cite{comtelacour} proposes such a study in the context of multivariate deconvolution. In our framework, the rates in these cases could be obtained through the same steps.\\

We also require the following assumptions on the kernel $\mathcal{K}$.
\\

\textbf{(K1)} There exists $S=(S_1,\dots,S_d)\in\R_d^+$, $K_1>0$ such that kernel $\mathcal{K}$ satisfies
$$
\mbox{supp}\mathcal{F}[\mathcal{K}]\subset [-S,S]\mbox{ and }\sup_{t\in\R^d}|\mathcal{F}[\mathcal{K}](t)|\leq K_1,
$$
where $\mbox{supp} \,g=\{x:g(x)\not= 0\}$ and $[-S,S]=\bigotimes_{i=1}^d [-S_i,S_i]$.\\

This assumption is trivially satisfied for different standard kernels, such as the \textit{sinc} kernel. This assumption arises for technicalities in the proofs and can be relaxed using a finer algebra. Moreover, in the sequel, we consider a kernel of order $m$, for a particular $m\in\N^d$.\\

\textbf{K($m$)} The kernel $\mathcal{K}$ is of order $m=(m_1,\ldots, m_d)\in\N^d$, i.e.
\begin{itemize}
\item $\int_{\R^d} \mathcal{K}(x)dx=1$
\item $\int_{\R^d} \mathcal{K}(x)x_j^kdx=0$, $\forall k\leq m_j$, $\forall j\in\{1,\ldots, d\}$.
\item $\int_{\R^d} |\mathcal{K}(x)||x_j|^{m_j}dx<K_2$, $\forall  j\in\{1,\ldots, d\}$.
\end{itemize}

The construction of kernels satisfying \textbf{K($m$)} could be managed as in \citet{booktsybakov}. This property is standard in nonparametric kernel estimation and allows to get satisfying approximations using the following assumption over the regularity of the density $f$.

\begin{definition}
For some $s=(s_1,\ldots, s_d)\in\R_d^+,$ $L>0$, we say that $f$ belongs to the anisotropic H\"older space $\mathcal{H}(s,L)$  if the following holds:
\begin{itemize}
\item the function $f$ admits derivatives with respect to $x_j$ up to order $\lfloor  s_j\rfloor$, where $\lfloor s_j\rfloor$ denotes the largest integer less than $s_j$.
\item $\forall j=1,\ldots ,d$, $\forall x\in\R^d$, $\forall x_j'\in\R$, the following Lipschitz condition holds:
$$
\left|\frac{\partial^{\lfloor s_j\rfloor}}{(\partial x_j)^{\lfloor s_j\rfloor}}f(x_1,\ldots,x_{j-1},x'_j,x_{j+1},\ldots,x_d)-\frac{\partial^{\lfloor s_j\rfloor}}{(\partial x_j)^{\lfloor s_j\rfloor}}f(x)\right|\leq L|x_j'-x_j|^{ s_j-\lfloor  s_j\rfloor}.
$$
\end{itemize}
\end{definition}
If a function $f$ belongs to the anisotropic H\"older space $\mathcal{H}(s,L)$, $f$ has an H\"older regularity $s_j$ in each direction $j=1,\ldots ,d$. As a result, it can be well-approximated pointwise using a $d$-dimensional Taylor formula.
\section{Main results}
\label{s:mainresult}
It is well-known that the behaviour of the rates of convergence $r_{n,f,\eta}(\mathcal{G})$ in (\ref{exact}) or $r^*_{n,f\eta}(\mathcal{G})$ in (\ref{nonexact}) is governed by the size of $\mathcal{G}$. In this paper, the size of the hypothesis space will be quantified in terms of $\epsilon$-entropy with bracketing of the metric space $\left(\{\ell(g),g\in\mathcal{G}\},L_2\right)$ as follows. 
\begin{definition}
Given a metric space $(\mathcal{F},d)$ and a real number $\epsilon>0$, the $\epsilon$-entropy with bracketing of $(\mathcal{F},d)$ is the quantity $\mathcal{H}_B(\mathcal{F},\epsilon,d)$ defined as the logarithm of the minimal integer $N_B(\epsilon)$ such that there exist pairs $(f_j,g_j)\in\mathcal{F}\times\mathcal{F}$, $j=1,\ldots, N_B(\epsilon)$ such that $f_j\leq g_j$, $d(f_j,g_j)\leq \epsilon$, and such that for any $f\in\mathcal{F}$, there exists a pair $(f_j,g_j)$ such that $f_j<f<g_j$.
\end{definition}
This notion of complexity allows to obtain local uniform concentration inequalities (see \cite{vdg} or \cite{wvdv}). Indeed, to reach fast rates of convergence (i.e. faster than $n^{-1/2}$), what really matters is not the total size of the hypothesis space but rather the size of a subclass of $\mathcal{G}$, made of functions with small errors. In this paper, we use an iterative localization principle originally introduced in \cite{koltpachenko} (see also \cite{kolt} for such a generality). More precisely, to state exact oracle inequalities, we consider functions in $\mathcal{G}$ with small excess risk as follows:
$$
\mathcal{G}(\delta)=\{g\in\mathcal{G}:R(g)-\inf_{g\in\mathcal{G}}R(g)\leq \delta\},
$$
whereas to get non-exact oracle inequalities, we consider the following set:
$$
\mathcal{G}'(\delta)=\{g\in\mathcal{G}:R(g)\leq \delta\}.
$$

Originally, \cite{mammen} (see also \cite{tsybakov2004}) formulated an usefull condition to get fast rates of convergence in classification in the exact case. This assumption is known as the margin assumption and has been generalized by \cite{empimini}. coarsely speaking, a margin assumption guarantees a nice relationship between the variance and the expectation of any function of the excess loss class. In this contribution, it appears as follows:
\\

\textbf{Margin Assumption MA($\kappa$)} There exists some $\kappa\geq 1$ such that: 
\beqnn
\,
\forall g\in\mathcal{G}, \no\ell(g,\cdot)-\ell(g^*(g),\cdot)\no_{L_2}^2\leq \kappa_0\left[R(g)-\inf_{g\in\mathcal{G}}R(g)\right]^{1/\kappa},
\eeqnn
for some $\kappa_0>0$ and where $g^*(g)\in\arg\min_{h\in\mathcal{G}}R(h)$ can depend on $g$ when $|\mathcal{G}(0)|\geq 2$.\\

Gathering with a local concentration inequality (see Theorem \ref{concentration} in Section \ref{proofs}) applied to the class $\mathcal{G}(\delta)$, this margin assumption is used in the exact-case to get fast rates. Note that provided that $\ell(g,\cdot)$ is bounded, \textbf{MA($\kappa$)} implies \textbf{MA($\kappa'$)} for any $\kappa'\geq \kappa$. Interestingly, in the framework of finite dimensional clustering with $k$-means, \cite{levrard} proposes to give a sufficient condition to have \textbf{MA($\kappa$)} with $\kappa=1$. This condition is related with the geometry of $f$ with respect to the optimal clusters and gives well-separated classes. It allows to interpret \textbf{MA($\kappa$)} exactly as a margin assumption in clustering (see Section \ref{application}). In the sequel, we call the parameter $\kappa$ in \textbf{MA($\kappa$)} the margin parameter. 

Recently, \cite{lm09} points out that one could wish non-exact oracle inequalities with fast rates under a weaker assumption. The idea is to relax significantly the margin assumption and use the loss class $\{\ell(g),g\in\mathcal{G}\}$ in \textbf{MA($\kappa$)} instead of the excess loss class $\{\ell(g)-\ell(g^*),g\in\mathcal{G}\}$. This framework will be considered at the end of this section for completeness. It leads to non-exact oracle inequalities in the noisy case.
\subsection{Exact Oracle inequalities}
We are now on time to state the main exact oracle inequality.
\begin{theorem}[Exact Oracle Inequality]
\label{mainest}
Suppose \textbf{(NA)}, \textbf{(K1)}, and \textbf{MA($\kappa$)} holds for some margin parameter $\kappa\geq 1$. Suppose $f\in\mathcal{H}(s,L)$ and \textbf{K($m$)} holds with $m=\lfloor s\rfloor$. Suppose there exists $0<\rho<1$, $c>O$ such that for every $\epsilon>0$:
\beqn
\label{complexity}
\mathcal{H}_B(\{\ell(g),g\in\mathcal{G}\},\epsilon,L_2)\leq c\epsilon^{-2\rho}.
\eeqn
Then, for any $t>0$, there exists some $n_0(t)\in\N^*$ such that for any $n\geq n_0(t)$, with probability greater than $1-e^{-t}$, the deconvolution ERM $\hat{g}_n^\lambda$ is such that:
\beqnn 
 R(\hat{g}_n^\lambda)\leq \inf_{g\in\mathcal{G}}R(g) +Cn^{-\tau_d(\kappa,\rho,\beta,s)},
\eeqnn
where $C>0$ is independent of $n$ and $\tau_d(\kappa,\rho,\beta,s)$ is given by:
$$
\tau_d(\kappa,\rho,\beta,s)=\frac{\displaystyle\kappa}{\displaystyle 2\kappa+\rho-1+(2\kappa-1)\sum_{j=1}^d\beta_j/s_j},
$$ 
and $\lambda=(\lambda_1,\ldots,\lambda_d)$ is chosen as:
$$
\lambda_j\approx n^{-\frac{2\kappa-1}{2\kappa s_j}\tau_d(\kappa,\rho,\beta,s)},\forall j=1,\ldots d.
$$
\end{theorem}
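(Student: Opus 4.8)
The plan is to run the standard localization/peeling machinery for fast rates (in the spirit of \cite{koltpachenko,kolt,empimini}) on the \emph{deconvoluted} empirical process $g\mapsto(R_\lambda-R_n^\lambda)(g)$, where $R_\lambda(g):=\E[\ell_\lambda(g,Z)]=\int_K\ell(g,x)f_\lambda(x)\,dx$ and $f_\lambda:=f*\mathcal{K}_\lambda$; the Fourier definition of $\mathcal{K}_\eta$ is exactly what makes $\E[\hat f_\lambda]=f_\lambda$, so that $R_\lambda(g)=\int_K\ell(g,x)\E[\hat f_\lambda(x)]\,dx$. Since $\hat g_n^\lambda$ minimizes $R_n^\lambda$, for any $g^*\in\arg\min_{\mathcal{G}}R$ one has
\beqnn
R(\hat g_n^\lambda)-\inf_{\mathcal{G}}R\ &\leq&\ \big[(R-R_\lambda)(\hat g_n^\lambda)-(R-R_\lambda)(g^*)\big]\\
&&\ +\ \big[(R_\lambda-R_n^\lambda)(\hat g_n^\lambda)-(R_\lambda-R_n^\lambda)(g^*)\big]\ =:\ (I)+(II).
\eeqnn
Term $(I)$ is handled deterministically: Cauchy--Schwarz on $K$ gives $(I)\leq|K|^{1/2}\no\ell(\hat g_n^\lambda,\cdot)-\ell(g^*,\cdot)\no_{L_2}\no f-f_\lambda\no_\infty$; then \textbf{MA($\kappa$)} bounds the middle factor by $\kappa_0^{1/2}[R(\hat g_n^\lambda)-\inf R]^{1/(2\kappa)}$, while $f\in\mathcal{H}(s,L)$ together with \textbf{K($m$)} for $m=\lfloor s\rfloor$ and a $d$-dimensional directional Taylor expansion give the anisotropic bias bound $\no f-f_\lambda\no_\infty\leq C\sum_{j=1}^d\lambda_j^{s_j}$. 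Young's inequality with conjugate exponents $2\kappa$ and $2\kappa/(2\kappa-1)$ then yields
\beqnn
(I)\ &\leq&\ \frac12\big[R(\hat g_n^\lambda)-\inf_{\mathcal{G}}R\big]\ +\ C'\Big(\sum_{j=1}^d\lambda_j^{s_j}\Big)^{2\kappa/(2\kappa-1)},
\eeqnn
the first piece being absorbed on the left-hand side. It is this margin-induced exponent $2\kappa/(2\kappa-1)$ on the bias that will force the atypical bandwidth scaling $\lambda_j\asymp(\delta^*)^{(2\kappa-1)/(2\kappa s_j)}$, rather than $(\delta^*)^{1/s_j}$.

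For the stochastic term $(II)$ I would apply the local concentration inequality (Theorem~\ref{concentration}) to the localized class $\{\ell_\lambda(g)-\ell_\lambda(g^*):g\in\mathcal{G}(\delta)\}$, after transferring \textbf{MA($\kappa$)} and the complexity bound (\ref{complexity}) from the direct loss class to its deconvoluted version. Writing $\ell_\lambda(g,\cdot)=\ell(g,\cdot)*\widetilde{\mathcal{K}}_\lambda$ with $\mathcal{F}[\widetilde{\mathcal{K}}_\lambda](t)=\mathcal{F}[\mathcal{K}](\lambda t)/\mathcal{F}[\eta](t)$, assumption \textbf{(K1)} confines the spectrum of $\widetilde{\mathcal{K}}_\lambda$ to $\{|t_i|\leq S_i/\lambda_i,\ i=1,\ldots,d\}$, on which \textbf{(NA)} gives $|\mathcal{F}[\widetilde{\mathcal{K}}_\lambda](t)|\leq C\prod_{i=1}^d\lambda_i^{-\beta_i}$; Parseval then shows that $h\mapsto h*\widetilde{\mathcal{K}}_\lambda$ has $L_2(\R^d)$-operator norm of order $\prod_i\lambda_i^{-\beta_i}$, whence (using $\no f*\eta\no_\infty\leq\no f\no_\infty<\infty$)
\beqnn
\Var\big(\ell_\lambda(g,Z)-\ell_\lambda(g^*,Z)\big)\ &\leq&\ \no f*\eta\no_\infty\,\no(\ell(g,\cdot)-\ell(g^*,\cdot))*\widetilde{\mathcal{K}}_\lambda\no_{L_2}^2\\
&\leq&\ C\Big(\prod_{i=1}^d\lambda_i^{-2\beta_i}\Big)\no\ell(g,\cdot)-\ell(g^*,\cdot)\no_{L_2}^2\ \leq\ C'\Big(\prod_{i=1}^d\lambda_i^{-2\beta_i}\Big)\delta^{1/\kappa}
\eeqnn
on $\mathcal{G}(\delta)$, the last step by \textbf{MA($\kappa$)}; a companion Fourier/$L_1$ estimate gives the uniform envelope $\no\ell_\lambda(g,\cdot)-\ell_\lambda(g^*,\cdot)\no_\infty\leq C\prod_i\lambda_i^{-\beta_i}$. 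For the complexity, the same operator-norm bound shows that (\ref{complexity}) transfers to the ill-posedness-inflated pseudometric $d_\lambda(g,g'):=\big(\prod_i\lambda_i^{-\beta_i}\big)\no\ell(g,\cdot)-\ell(g',\cdot)\no_{L_2}$, with respect to which $(II)$ is sub-Gaussian up to the Bernstein correction, i.e. $\mathcal{H}(\{\ell_\lambda(g)\},\epsilon,L_2)\leq c\big(\prod_i\lambda_i^{-2\rho\beta_i}\big)\epsilon^{-2\rho}$.

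Feeding the variance, envelope and entropy bounds into Theorem~\ref{concentration} (a Bousquet/Talagrand concentration around the mean plus a Dudley-type bound for the expected localized supremum), the $\lambda_i$-powers coming from the entropy ($\prod_i\lambda_i^{-\rho\beta_i}$) and from the diameter/variance ($\prod_i\lambda_i^{-(1-\rho)\beta_i}$) recombine into a single $\prod_i\lambda_i^{-\beta_i}$, so that the localized complexity is $\phi_n(\delta)\leq C\,n^{-1/2}\big(\prod_i\lambda_i^{-\beta_i}\big)\delta^{(1-\rho)/(2\kappa)}$, while the Bernstein remainder $n^{-1}\prod_i\lambda_i^{-\beta_i}$ is of strictly smaller order once $n\geq n_0(t)$ (this is where $\kappa\geq1$ and $\rho>0$ are used). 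Peeling over the shells $\{2^k\delta^*\leq R(g)-\inf_{\mathcal{G}}R\leq 2^{k+1}\delta^*\}$ then yields, with probability at least $1-e^{-t}$,
\beqnn
R(\hat g_n^\lambda)-\inf_{\mathcal{G}}R\ &\leq&\ C\Big(\big[\textstyle n^{-1/2}\prod_i\lambda_i^{-\beta_i}\big]^{2\kappa/(2\kappa-1+\rho)}+\big(\textstyle\sum_j\lambda_j^{s_j}\big)^{2\kappa/(2\kappa-1)}\Big)\ =:\ C(A+B).
\eeqnn
Finally, choosing $\lambda_j\asymp(\delta^*)^{(2\kappa-1)/(2\kappa s_j)}$ equalizes all the $\lambda_j^{s_j}$ (so $B\asymp\delta^*$) and, since then $\prod_i\lambda_i^{-\beta_i}\asymp(\delta^*)^{-\frac{2\kappa-1}{2\kappa}\sum_j\beta_j/s_j}$, reduces $A\asymp\delta^*$ to $(\delta^*)^{\frac{2\kappa-1+\rho+(2\kappa-1)\sum_j\beta_j/s_j}{2\kappa}}\asymp n^{-1/2}$, that is $\delta^*\asymp n^{-\tau_d(\kappa,\rho,\beta,s)}$ with exactly the stated exponent; back-substitution gives $\lambda_j\asymp n^{-\frac{2\kappa-1}{2\kappa s_j}\tau_d(\kappa,\rho,\beta,s)}$.

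The main obstacle is the control of the deconvoluted process of the middle paragraphs: one must show that passing $\ell(g)-\ell(g^*)$ through $\widetilde{\mathcal{K}}_\lambda$ inflates the variance, the sup-norm envelope and the metric entropy by exactly the mildly-ill-posed factors $\prod_i\lambda_i^{-2\beta_i}$, $\prod_i\lambda_i^{-\beta_i}$ and $\prod_i\lambda_i^{-2\rho\beta_i}$ and no worse, since it is this tight matching that makes the exponents recombine into the single $\prod_i\lambda_i^{-\beta_i}$ in $\phi_n(\delta)$ (and hence into the clean $\sum_j\beta_j/s_j$ in $\tau_d$); this is precisely where \textbf{(NA)} and the compactly-supported-spectrum assumption \textbf{(K1)} are used, and where keeping everything factored coordinate-wise --- a product kernel with directional indices $(\beta_i)$ and $(s_i)$ --- is essential to preserve the anisotropy. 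A secondary, more routine, point is to tune the peeling argument and the threshold $n_0(t)$ so that both the Bernstein remainder and the discretization error stay below $\delta^*$.
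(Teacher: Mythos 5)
Your route is essentially the paper's own: the same decomposition of the basic inequality into a deterministic bias part and a centered deconvoluted empirical process, the same Plancherel-type transfer showing that passing the loss through $\mathcal{K}_\eta$ inflates $L_2(\tilde P)$-variances by $\Pi_{i=1}^d\lambda_i^{-2\beta_i}$ (Lemma~\ref{lip}), the same anisotropic Taylor bias bound combined with \textbf{MA($\kappa$)} and Young's inequality producing the exponent $2\kappa s_j/(2\kappa-1)$ (Lemma~\ref{bias}), the same localized Bousquet concentration with expected localized supremum of order $n^{-1/2}\Pi_i\lambda_i^{-\beta_i}\delta^{(1-\rho)/(2\kappa)}$, and the same fixed-point/balancing computation yielding $\tau_d(\kappa,\rho,\beta,s)$ and the stated choice of $\lambda_j$. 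One minor discrepancy: your claimed envelope $\sup_g\no\ell_\lambda(g,\cdot)\no_\infty\leq C\Pi_i\lambda_i^{-\beta_i}$ via an $L_1$ estimate is not justified by \textbf{(NA)}--\textbf{(K1)} alone (controlling $\no\mathcal{K}_\eta\no_{L_1}$ needs more); the paper only proves $C\Pi_i\lambda_i^{-(\beta_i+1/2)}$ by Cauchy--Schwarz, but since the envelope only enters the lower-order Bernstein terms this does not change the rate.

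The genuine gap is the case $|\mathcal{G}(0)|\geq 2$. You fix one minimizer $g^*$ and localize the class $\{\ell_\lambda(g)-\ell_\lambda(g^*):g\in\mathcal{G}(\delta)\}$, bounding its variance by $C\Pi_i\lambda_i^{-2\beta_i}\delta^{1/\kappa}$ ``by \textbf{MA($\kappa$)}''; the same issue appears in your bound of term $(I)$ through $\no\ell(\hat g_n^\lambda,\cdot)-\ell(g^*,\cdot)\no_{L_2}$. But \textbf{MA($\kappa$)} only controls the $L_2$-distance of $\ell(g)$ to $\ell(g^*(g))$, where the minimizer $g^*(g)$ may depend on $g$. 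If there are two minimizers far apart in $L_2$, the diameter of your localized class does not shrink as $\delta\to 0$, the variance term in Bousquet's inequality stays of constant order (times $\Pi_i\lambda_i^{-2\beta_i}$), and the fixed point only delivers the slow rate. This is exactly why the paper's proof has two cases: Case~1 (unique minimizer) is your argument, while Case~2 introduces the geometric quantity $r(\sigma,\delta)=\sup_{g\in\mathcal{G}(\delta)}\inf_{g'\in\mathcal{G}(\sigma)}\sqrt{\E_{\tilde P}(\ell_\lambda(g,Z)-\ell_\lambda(g',Z))^2}$ and applies the concentration inequality to the correspondingly localized variable $W_\lambda(\delta)$ (Koltchinskii's refined localization), checking that the cross-terms between distinct minimizers vanish. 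Your sketch therefore proves the theorem only under uniqueness of the oracle; to cover the statement as written, which explicitly allows $g^*(g)$ to depend on $g$, you need this additional localization step.
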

The proof of this result is postponed to Section \ref{proofs}. We list some remarks below.
\begin{remark}[Comparison with \cite{kolt} or \cite{mammen}] This result gives the order of the residual term in the exact oracle inequalities. The risk of the estimator $\hat{g}_n^\lambda$ mimics the risk of the oracle, up to a residual term detailled in Theorem \ref{mainest}. The price to pay for the error-in-variables model depends on the asymptotic behaviour of the characteristic function of the noise distribution. If  $\beta=0\in\R^d$ in the noise assumption \textbf{(NA)}, the residual term in Theorem \ref{mainest} satisfies:
$$r_n(\mathcal{G})=O\left(n^{-\frac{\kappa}{2\kappa+\rho-1}}\right).$$
It corresponds to the standard fast rates stated in \cite{kolt} or \cite{mammen} for the direct case.
\end{remark}
\begin{remark}[Comparison with \cite{loustau12}]
In comparison with \cite{loustau12}, these rates deal with an anisotropic behaviour of the density $f$. If $s_j=s$ for any direction, we obtain the same asymptotics as in \cite{loustau12} for supervised classification, namely:
$$r_n(\mathcal{G})=O\left(n^{-\frac{\kappa s }{s(2\kappa+\rho-1)+(2\kappa-1)\sum_{j=1}^d\beta_j}}\right).
$$ 
The result of Theorem \ref{mainest} gives a generalization of \cite{loustau12} to the anisotropic case, in an unsupervised framework. It gives some intuition with respect to the optimality of this result.
\end{remark}
\begin{remark}[The anisotropic case is of practical interest] The result of Theorem \ref{mainest} gives some insights into the noisy quantization problem with an anisotropic density $f$. In this problem, due to the anisotropic behaviour of the density, the choice of the regularization parameters $\lambda_j$, $j=1,\ldots ,d$ depends on $j$. This result is of practical interest since it allows to consider different bandwidth coordinates for the deconvolution ERM. In finite dimensional noisy clustering with $k\geq 2$, this configuration arises when the optimal centers are not uniformly distributed over the support of the density. This case could not be treated at least from theoretical point of view using the previous isotropic approach stated in \cite{loustau12} or \cite{pinkfloyds}.
\end{remark}
\begin{remark}[Fast rates]
The most favorable cases arise when $\rho\to 0$ and $\beta$ is small, whereas at the same time density $f$ has sufficiently high H\"older exponents $s_j$. Indeed, fast rates occur when $\tau_d(\kappa,\rho,\beta,s)\geq 1/2$, or equivalently, $(2\kappa-1)\sum\beta_j/s_j<1-\rho$. If $\rho=0$ and $\kappa=1$ (see the particular case of Section \ref{application}), we have the following condition to get fast rates:
$$
\sum_{j=1}^d\frac{\beta_j}{s_j}<1.
$$
\end{remark}
\begin{remark}[Choice of $\lambda$]
The optimal choice of $\lambda$ in Theorem \ref{mainest} optimizes a bias variance decomposition as in \cite{loustau12}. This choice depends on unknown parameters such as the margin parameter $\kappa$, the H\"older exponents $(s_1,\ldots, s_d)$ of the density $f$ and the degree of illposedness $\beta$. A challenging open problem is to derive adaptive choice of $\lambda$ to lead to the same fast rates of convergence. This could be the purpose of future works.
\end{remark}
\begin{remark}[Comparison with \cite{comtelacour}]
It is also important to note that the optimal choice of the multivariate bandwidth $\lambda$ does not coincide with the optimal choice of the bandwidth in standard nonparametric anisotropic density deconvolution. Indeed, it is stated in \cite{comtelacour} that under the same regularity and ill-posedness assumptions, the optimal choice of the bandwidth $\lambda=(\lambda_1,\ldots,\lambda_d)$ has the following asymptotics:
$$
\lambda_u\approx n^{-\frac{1}{s_u\left(2+\sum_{j=1}^d\frac{2\beta_j+1}{s_j}\right)}}.
$$
The proposed asymptotic optimal calibration of Theorem \ref{mainest} is rather different. It depends explicitely on parameter $\rho$, which measures the complexity of the decision set $\mathcal{G}$, and the margin parameter $\kappa\geq 1$. It shows rather well that our bandwidth selection problem is not equivalent to standard nonparametric estimation problems. It illustrates one more time that our procedure is not a plug-in procedure. 
\end{remark}
\subsection{Non-exact oracle inequalities}
In this section, we also suggest a non-exact version of Theorem \ref{mainest} without the margin assumption \textbf{MA($\kappa$)}. However, to get this result,  we need an additional assumption about the compact $K$ appearing in the empirical risk (\ref{decer}). The assumption has the following form:\\

\textbf{Density assumption DA($c_0$)} There exists a constant $c_0>0$ such that the compact set $K$ in (\ref{decer}) satisfies:
$$
K\subset \{x:f(x)\geq c_0\}.
$$

This assumption is trivially satisfied if $f>0$ in $\R^d$ with a constant $c_0$ depending on the size of $K$. Assumption \textbf{DA($c_0$)} is necessary to get fast rates in the context of non-exact oracle inequalities without the margin assumption \textbf{MA($\kappa$)}. We are now on time to state the following result.
\begin{theorem}[Non-Exact Oracle Inequality]
\label{mainpred}
Suppose \textbf{(NA)}, \textbf{DA($c_0$)} and \textbf{(K1)} holds for some constant $c_0>0$. Suppose $f\in\mathcal{H}(s,L)$ and \textbf{K($m$)} holds with $m=\lfloor s\rfloor$. Suppose there exists $0<\rho<1$, $c>O$ such that for every $\epsilon>0$:
\beqnn
\mathcal{H}_B(\{\ell(g),g\in\mathcal{G}\},\epsilon,L_2)\leq c\epsilon^{-2\rho}.
\eeqnn
Then, for any $t>0$, there exists some $n_0(t)\in\N^*$ such that for any $\epsilon>0$, for any $n\geq n_0(t)$, with probabilty higher than $1-e^{-t}$, $\hat{g}_n^\lambda$ satisfies:
\beqnn 
R(\hat{g})\leq (1+\epsilon)\inf_{g\in\mathcal{G}}R(g) +Cn^{-\tau^*(\rho,\beta,s)},
\eeqnn
where $C>0$ is a constant which depends on $\epsilon,\beta,s,\rho,c_0$ and
$$ 
\tau^*(\rho,\beta,s)=\frac{\displaystyle 1}{\displaystyle1+\rho+\sum_{j=1}^d\beta_j/s_j},
$$
whereas $\lambda=(\lambda_1,\ldots,\lambda_d)$ is chosen as:
$$
\lambda_j\sim n^{-\frac{\tau^*(\rho,\beta,s)}{2s_j}},\forall j=1,\ldots d.
$$
\end{theorem}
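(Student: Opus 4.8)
The plan is to run a bias/variance decomposition of the risk in the spirit of \cite{loustau12}, the novelty being that \textbf{DA($c_0$)} plays the role of the margin assumption. First I would set $\hat g=\hat g_n^\lambda$, fix $g^*\in\arg\min_{g\in\mathcal G}R(g)$, and introduce the deterministic deconvolution risk $R_\lambda(g):=\E R_n^\lambda(g)=\int_K\ell(g,x)(\mathcal K_\lambda*f)(x)dx$, where $\mathcal K_\lambda(u)=\frac1{\Pi_{j=1}^d\lambda_j}\mathcal K(u_1/\lambda_1,\dots,u_d/\lambda_d)$ and the identity $\E[\frac1\lambda\mathcal K_\eta(\frac{Z-x}{\lambda})]=(\mathcal K_\lambda*f)(x)$ is the deconvolution property built into $\mathcal K_\eta$. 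Since $\hat g$ minimizes $R_n^\lambda$, I would start from $R_n^\lambda(\hat g)\le R_n^\lambda(g^*)$, i.e.
\[
0\ \ge\ R_n^\lambda(\hat g)-R_n^\lambda(g^*)\ =\ \big[R_\lambda(\hat g)-R_\lambda(g^*)\big]-\big[\nu_n(\hat g)-\nu_n(g^*)\big],
\]
with $\nu_n(g):=R_\lambda(g)-R_n^\lambda(g)$, and control separately the bias $R_\lambda(g)-R(g)$ and the fluctuations of $\nu_n$. Throughout I would use that $\ell$ is bounded on the compact $K$ and that \textbf{DA($c_0$)} yields the Bernstein-type inequality $\no\ell(g,\cdot)\no_{L_2(K)}^2\le c_0^{-1}\no\ell\no_\infty R(g)$ for all $g\in\mathcal G$, which follows from $R(g)=\int_K\ell(g,x)f(x)dx\ge c_0\int_K\ell(g,x)dx$.

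For the bias, I would combine \textbf{K($m$)} of order $m=\lfloor s\rfloor$ with $f\in\mathcal H(s,L)$ and a coordinatewise Taylor expansion to get $\no\mathcal K_\lambda*f-f\no_{L_2(K)}\le c_1\sum_{j=1}^d\lambda_j^{s_j}$; then Cauchy--Schwarz and the DA-bound give $|R_\lambda(g)-R(g)|\le\no\ell(g,\cdot)\no_{L_2(K)}\no\mathcal K_\lambda*f-f\no_{L_2(K)}\le c_2\sqrt{R(g)}\sum_j\lambda_j^{s_j}$, and Young's inequality upgrades this, for every $\theta>0$, to $|R_\lambda(g)-R(g)|\le\theta R(g)+c_\theta(\sum_j\lambda_j^{s_j})^2$. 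This is the step that converts the first-order bias $\sum_j\lambda_j^{s_j}$ into the squared bias $(\sum_j\lambda_j^{s_j})^2$ at the harmless cost of $\theta R(g)$, and it is exactly what makes the non-exact exponent $\tau^*$ free of any margin parameter.

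For the stochastic term, I would first establish, from \textbf{(NA)} and \textbf{(K1)}, the deconvolution-kernel estimate $\no\mathcal K_\eta\no_{L_1}\le c_3\Pi_j\lambda_j^{-\beta_j}$; this gives $\no\ell_\lambda(g,\cdot)\no_\infty\le\no\ell\no_\infty\no\mathcal K_\eta\no_{L_1}\lesssim\Pi_j\lambda_j^{-\beta_j}$ and, by DA again, $\E_{\tilde P}|\ell_\lambda(g,Z)|\le\no\tilde f\no_\infty\no\ell(g,\cdot)\no_{L_1(K)}\no\mathcal K_\eta\no_{L_1}\lesssim R(g)\Pi_j\lambda_j^{-\beta_j}$, hence the variance control $\E_{\tilde P}\ell_\lambda(g,Z)^2\le\no\ell_\lambda(g,\cdot)\no_\infty\E_{\tilde P}|\ell_\lambda(g,Z)|\lesssim R(g)W_\lambda$ with $W_\lambda:=\Pi_j\lambda_j^{-2\beta_j}$. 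Since $g\mapsto\ell_\lambda(g,\cdot)$ is $\sqrt{W_\lambda}$-Lipschitz from $(\{\ell(g)\},L_2)$ to $(\{\ell_\lambda(g)\},L_2(\tilde P))$, the entropy hypothesis transfers to $\mathcal H_B(\{\ell_\lambda(g)\},u,L_2(\tilde P))\le cW_\lambda^\rho u^{-2\rho}$. I would then run a peeling argument over the shells $\{g:2^k<R(g)\le2^{k+1}\}$ with Talagrand's inequality (Theorem \ref{concentration}) and Dudley's bound for $\E\sup_{\mathcal G'(\delta)}|\nu_n|$, whose fixed point is $\delta_n^*\asymp(W_\lambda/n)^{1/(1+\rho)}$, to obtain: for any $t>0$ there is $n_0(t)$ so that for $n\ge n_0(t)$, with probability at least $1-e^{-t}$, $|\nu_n(g)|\le\theta R(g)+c_\theta(W_\lambda/n)^{1/(1+\rho)}$ for all $g\in\mathcal G$ simultaneously; a separate Bernstein bound handles $\nu_n(g^*)\le\theta R(g^*)+c_\theta W_\lambda/n+c_\theta\sqrt{W_\lambda}\,t/n$, the last two terms being $o\big((W_\lambda/n)^{1/(1+\rho)}\big)$ since $\rho<1$ and $W_\lambda\to\infty$.

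Finally I would insert these into $0\ge[R_\lambda(\hat g)-R_\lambda(g^*)]-[\nu_n(\hat g)-\nu_n(g^*)]$, move all terms proportional to $R(\hat g)$ to the left and absorb those proportional to $R(g^*)$ into a factor $(1+\epsilon)$ by taking $\theta=\theta(\epsilon)$ small, reaching
\[
R(\hat g)\ \le\ (1+\epsilon)\inf_{g\in\mathcal G}R(g)+C\Big[\left(\sum_{j=1}^d\lambda_j^{s_j}\right)^2+(W_\lambda/n)^{1/(1+\rho)}\Big],
\]
and then optimise: $\lambda_j\sim n^{-\tau^*/(2s_j)}$ makes every $\lambda_j^{s_j}$ of order $n^{-\tau^*/2}$, so the squared bias is $\asymp n^{-\tau^*}$, while $W_\lambda=\Pi_j\lambda_j^{-2\beta_j}=n^{\tau^*\sum_j\beta_j/s_j}$ gives $(W_\lambda/n)^{1/(1+\rho)}=n^{(\tau^*\sum_j\beta_j/s_j-1)/(1+\rho)}=n^{-\tau^*}$, the last equality being precisely the definition $\tau^*=(1+\rho+\sum_j\beta_j/s_j)^{-1}$; the two contributions balance at $n^{-\tau^*}$. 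The hard part will be the stochastic step: getting the $\lambda$-dependence of the deconvolved class exactly right (the bound $\no\mathcal K_\eta\no_{L_1}\lesssim\Pi_j\lambda_j^{-\beta_j}$, the variance factor $W_\lambda=\Pi_j\lambda_j^{-2\beta_j}$, and the entropy transfer), so that the localization fixed point is precisely $(W_\lambda/n)^{1/(1+\rho)}$, and checking that the Bernstein residual at $g^*$ and the remainder terms of Talagrand's inequality stay negligible against this rate; everything hinges on \textbf{DA($c_0$)}, which simultaneously provides the $L_2$-bound squaring the bias and the linear-in-$R(g)$ variance estimate that removes the margin factor from the exponent.
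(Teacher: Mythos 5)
Your proposal is correct in substance and follows the same skeleton as the paper's proof: a bias bound obtained from \textbf{K($m$)}, $f\in\mathcal{H}(s,L)$, \textbf{DA($c_0$)} and Young's inequality, of the form $|R(g)-R^\lambda(g)|\leq rR(g)+Cr^{-1}\sum_j\lambda_j^{2s_j}$ (this is exactly Lemma~\ref{biasbest}); a variance control $\E_{\tilde P}\ell_\lambda(g,Z)^2\lesssim \Pi_j\lambda_j^{-2\beta_j}R(g)$ coming from \textbf{DA($c_0$)} and boundedness (Lemma~\ref{lipbest}); a localization whose fixed point is $\left(\Pi_j\lambda_j^{-2\beta_j}/n\right)^{1/(1+\rho)}$; absorption of the terms proportional to $\inf_{\mathcal{G}}R$ into the factor $(1+\epsilon)$; and the same balancing giving $\tau^*$ and the stated $\lambda_j$. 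The genuine differences are in the implementation of the stochastic step. You peel over risk shells with Talagrand plus Dudley to get a ratio-type uniform bound, whereas the paper runs Koltchinskii's iterative localization on levels $\delta_j=q^{-j}$ (Lemma~\ref{lemmapred}) with Bousquet's inequality (Theorem~\ref{concentration}); these are equivalent technologies (the paper's bookkeeping surfaces as $t'=t+\log\log_q n$). Two of your intermediate claims, however, are not consequences of \textbf{(NA)}+\textbf{(K1)} as stated and should be replaced by the paper's versions: (i) the bound $\no\mathcal{K}_\eta\no_{L_1}\lesssim\Pi_j\lambda_j^{-\beta_j}$ needs additional smoothness of $\mathcal{F}[\mathcal{K}]/\mathcal{F}[\eta](\cdot/\lambda)$; the paper avoids it by using Plancherel to get the variance bound directly (Lemma~\ref{lipbest}) and the weaker sup-norm bound $\no\ell_\lambda(g,\cdot)\no_\infty\lesssim\Pi_j\lambda_j^{-\beta_j-1/2}$ (Lemma~\ref{lip}), which suffices because it only enters lower-order terms; (ii) bracketing entropy does not transfer automatically under the $L_2$-Lipschitz map $\ell(g)\mapsto\ell_\lambda(g)$, since $\mathcal{K}_\eta$ changes sign and pointwise brackets are not preserved; the paper instead invokes Lemma~1 of \cite{loustau12}, which bounds the localized expected supremum of the noisy empirical process by $C\Pi_j\lambda_j^{-\beta_j}n^{-1/2}\delta^{(1-\rho)/2}$ directly in terms of the entropy of the original class. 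With these two substitutions your argument coincides with the paper's and yields the claimed rate.
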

\begin{remark}[Same phenomenon as in \cite{lm09}]
The quantity \\$\tau^*(\rho,\beta,s)$ describes the order of the residual term in Theorem \ref{mainpred}. We can see coarsely that $\tau^*(\rho,\beta,s)=\tau(1,\rho,\beta,s)$ where $\tau(1,\rho,\beta,s)$ appears in Theorem \ref{mainest}. As a result, this oracle inequality gives the same asymptotic as the previous result under \textbf{MA($\kappa$)} with  $\kappa=1$, which corresponds to the strong margin assumption. Here, it holds without any margin assumption. The prize to pay is the constant in front of the infimum. This phenomenom has been already pointed out in \cite{lm09} in a supervised framework and in the direct case. Of course, constant $C>0$ in front of the rate depends on $\epsilon>0$ and exploses when $\epsilon$ tends to $ 0$ (see condition (\ref{epsiloncond}) in the proof).
\end{remark}
\begin{remark}[The density assumption] Unfortunately, there is an additional assumption to get Theorem \ref{mainpred} in comparison to Theorem \ref{mainest}, namely the assumption \textbf{DA($c_0$)}. This assumption is specific to the indirect framework where we need to control the variance of the convoluted loss $\ell_\lambda(g,Z)$ with respect to the variance of $\ell(g,X)$. More precisely, we need the following inequality (in dimension $d=1$ for simplicity):
$$
\E_{\tilde P}\ell_\lambda(g,Z)^2\leq \lambda^{-2\beta} \E_P\ell(g,X)^2,\,\forall g\in\mathcal{G}.
$$
This can be done only if we restrict $\ell_\lambda(\cdot)$ to a region where $f>0$. Otherwise, there is no reason to obtain such a control (see Lemma \ref{lipbest} and also the related discussion in \cite{loustau12}).
\end{remark}

\section{Application to finite dimensional noisy clustering}
\label{application}
The aim of this section is to use the general upper bound of Theorem \ref{mainest} in the framework of noisy finite dimensional clustering. To frame the problem of finite dimensional clustering into the general study of this paper, we first introduce the following notation. Given some known integer $k\geq 2$, let us consider $\mathbf{c}= (c_1, \ldots, c_k)\in\mathcal{C}$ the set of possible centers, where $\mathcal{C}\subseteq\R^{dk}$ is compact. The loss function $\gamma:\R^{dk}\times \R^d$ is defined as:
$$
\gamma(\mathbf{c},x)=\min_{j=1,\ldots k}\no x-c_j\no^2,
$$
where $\no\cdot\no$ stands for the standard euclidean norm on $\R^d$. The corresponding true risk or clustering risk is given by $R(\mathbf{c})=\E_P\gamma(\mathbf{c},X)$. In the sequel, we introduce a constant $M\geq 0$ such that $\no X\no_\infty\leq M$. This boundedness assumption ensures $\gamma(\mathbf{c},X)$ to be bounded. The performances of the empirical minimizer $\hat{\mathbf{c}}_n=\arg\min_\mathcal{C}P_n\gamma(\mathbf{c})$ (also called $k$-means clustering algorithm) have been widely studied in the literature. Consistency was shown by \citet{pollard81} when $\E\no X\no^2< \infty$ whereas \citet{llz} or \citet{biau} gives rates of convergence of the form $\mathcal{O}(1/\sqrt{n})$ for the excess clustering risk defined as $R(\hat{\mathbf{c}}_n)-R(c^*)$, where $c^*\in\mathcal{M}$ the set of all possible optimal clusters. More recently, \citet{levrard} proposes fast rates of the form $\mathcal{O}(1/n)$ under Pollard's regularity assumptions. It improves a previous result of \citet{gg}. The main ingredient of the proof is a localization argument in the spirit of \citet{svm}. 

In this section, we study the problem of clustering where we have at our disposal a corrupted sample $Z_i=X_i+\epsilon_i$, $i=1,\ldots ,n$ where the $\epsilon_i$'s are i.i.d. with density $\eta$ satisfying \textbf{(NA)} of Section \ref{s:decerm}. For this purpose, we introduce the following deconvolution empirical risk minimization:
\beqn
\label{noisykmeans}
\arg\min_{\mathbf{c}\in\mathcal{C}}\frac{1}{n}\sum_{i=1}^n\gamma_\lambda(\mathbf{c},Z_i),
\eeqn
where $\gamma_\lambda(\mathbf{c},z)$ is a deconvolution $k$-means loss defined as:
\beqnn
\gamma_\lambda(\mathbf{c},z)=\int_{K}\frac{1}{\lambda}\mathcal{K}_\eta\left(\frac{z-x}{\lambda}\right)\min_{j=1,\ldots k}\|x-c_j\|^2dx.
\eeqnn
The kernel $\mathcal{K}_\eta$ is the deconvolution kernel introduced in Section \ref{s:decerm} with $\lambda=(\lambda_1,\ldots,\lambda_d)\in\R^d_+$ a set of positive bandwidths chosen later on. We investigate the generalization ability of the solution of (\ref{noisykmeans}) in the context of Pollard's regularity assumptions. For this purpose, we will use the following regularity assumptions on the source distribution $P$.

\noindent
\\
\textbf{ Pollard's Regularity Condition (PRC)}: The distribution $P$ satisfies the following two conditions:
                  
                  \begin{enumerate}
                  \item $P$ has a continuous density $f$ with respect to Lebesgue measure on $\mathbb{R}^d$,
                  \item The Hessian matrix of $ \mathbf{c} \longmapsto P \gamma (\mathbf{c},.) $ is positive definite for all optimal vector of clusters $\mathbf{c}^*$.
                  \end{enumerate}
                  It is easy to see that using the compactness of $\mathcal{B}(0,M)$, $\no X\no_\infty\leq M$ and \textbf{(PRC)} ensures that there exists only a finite number of optimal clusters $\c^*\in\mathcal{M}$. This number is denoted as $|\mathcal{M}|$ in the rest of this section.
Moreover, Pollard's conditions can be related to the margin assumption \textbf{MA($\kappa$)} of Section \ref{s:mainresult} thanks to the following lemma due to \citet{gg}.
\begin{lemma}[\citet{gg}]
\label{lemap}
Suppose $\no X\no_\infty\leq M$ and \textbf{(PRC)} holds. Then, for any $\mathbf{c}\in\mathcal{B}(0,M)$:
$$
\no \gamma(\mathbf{c},\cdot)-\gamma(\mathbf{c}^*(\c),\cdot)\no_{L_2}\leq C_1\|\c-\c^*(\c)\|^2\leq C_1C_2\left(R(\c)-R(\c^*(\c))\right),
$$
where $c^*(\c)\in\arg\min_{\c^*}\|\c-\c^*\|$.
\end{lemma}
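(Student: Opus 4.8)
The plan is to prove the two displayed inequalities separately: the first is a purely metric Lipschitz estimate that uses only $\no X\no_\infty\le M$ and the compactness of $\mathcal{C}$, while the second is where Pollard's condition \textbf{(PRC)} enters. Write $\c^*=\c^*(\c)$ for brevity, and recall from the discussion preceding the lemma that $\no X\no_\infty\le M$ together with \textbf{(PRC)} forces the set $\mathcal{M}=\{\c^*_1,\dots,\c^*_{|\mathcal{M}|}\}$ of optimal codebooks to be finite, each $\c^*_m$ lying in (the interior of) $\mathcal{B}(0,M)$. \emph{Step 1.} Fix $x$ and pick indices $i,j$ with $\gamma(\c,x)=\no x-c_i\no^2$ and $\gamma(\c^*,x)=\no x-c^*_j\no^2$; using the defining minima twice gives the two-sided bound
\[
\no x-c_i\no^2-\no x-c^*_i\no^2\;\le\;\gamma(\c,x)-\gamma(\c^*,x)\;\le\;\no x-c_j\no^2-\no x-c^*_j\no^2 ,
\]
so that $|\gamma(\c,x)-\gamma(\c^*,x)|\le\max_{1\le\ell\le k}\big|\langle c_\ell+c^*_\ell-2x,\,c_\ell-c^*_\ell\rangle\big|\le C(M,d)\,\no\c-\c^*\no$ by Cauchy--Schwarz, $\no X\no_\infty\le M$, boundedness of the codebooks, and $\no c_\ell-c^*_\ell\no\le\no\c-\c^*\no$. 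Squaring and integrating against $f$ over $K$ yields $\no\gamma(\c,\cdot)-\gamma(\c^*,\cdot)\no_{L_2}^2\le C_1\no\c-\c^*\no^2$, which is the first inequality (and needs neither \textbf{(PRC)} nor the optimality of $\c^*$).

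\emph{Step 2.} The target is $\no\c-\c^*(\c)\no^2\le C_2\big(R(\c)-R(\c^*(\c))\big)$ for every $\c\in\mathcal{B}(0,M)$, where $R(\c)=P\gamma(\c,\cdot)$. Around each optimum $\c^*_m$, $R$ is twice differentiable with vanishing gradient and, by \textbf{(PRC)}, positive definite Hessian, so a second-order Taylor expansion provides a radius $r_m>0$ and a constant $a_m>0$ with $R(\c)-R(\c^*_m)\ge a_m\no\c-\c^*_m\no^2$ on $\mathcal{B}(\c^*_m,r_m)$; after shrinking $r_m$ so that $\c^*(\c)=\c^*_m$ on this ball, the target holds locally with constant $a_m^{-1}$. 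On the compact complement $\mathcal{C}\setminus\bigcup_m\mathcal{B}(\c^*_m,r_m)$ the continuous map $\c\mapsto R(\c)-\inf_{\mathcal{C}}R$ is bounded below by some $\delta>0$ (its zero set being exactly $\mathcal{M}$), while $\no\c-\c^*(\c)\no^2\le\mathrm{diam}(\mathcal{C})^2$, so the target holds there with constant $\mathrm{diam}(\mathcal{C})^2/\delta$. Taking $C_2$ to be the maximum of the finitely many constants $a_m^{-1}$ and $\mathrm{diam}(\mathcal{C})^2/\delta$ finishes Step 2; chaining with Step 1 gives $\no\gamma(\c,\cdot)-\gamma(\c^*(\c),\cdot)\no_{L_2}^2\le C_1C_2\big(R(\c)-R(\c^*(\c))\big)$, i.e.\ exactly \textbf{MA($\kappa$)} with $\kappa=1$, since $R(\c^*(\c))=\inf_{\mathcal{C}}R$.

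\emph{Main obstacle.} The delicate part is Step 2: one must upgrade the merely pointwise positive-definiteness granted by \textbf{(PRC)} at each $\c^*_m$ into a quadratic \emph{lower} bound for the excess risk that is uniform over all of $\mathcal{B}(0,M)$. This is achieved by patching the local Taylor estimates near the finitely many optima with a compactness argument on the region bounded away from $\mathcal{M}$, glued together by taking a maximum of constants; the finiteness of $\mathcal{M}$ and the fact that $\c^*(\c)$ locally coincides with the nearest optimum are precisely what make this patching legitimate. Step 1, by contrast, is a short computation with Voronoi cells and requires no structure of the optimal codebooks. The overall argument follows \citet{gg}.
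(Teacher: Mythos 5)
The paper does not actually prove this lemma: it is imported verbatim from \citet{gg}, so there is no in-paper argument to compare against. Your reconstruction is the standard one from that literature and it is essentially correct: the pointwise identity $\no x-c_\ell\no^2-\no x-c^*_\ell\no^2=\langle c_\ell+c^*_\ell-2x,\,c_\ell-c^*_\ell\rangle$ together with $\no X\no_\infty\leq M$ and bounded codebooks gives the Lipschitz-type $L_2$ bound, and the second inequality is obtained exactly as in \citet{gg} and \citet{levrard}: a second-order expansion of $R$ at each of the finitely many optima (gradient zero, Hessian positive definite by \textbf{(PRC)}, with the differentiability of $R$ near optima supplied by the continuity of $f$ via Pollard's computation) yields a local quadratic lower bound on the excess risk, and on the compact set bounded away from $\mathcal{M}$ the excess risk is bounded below by a positive constant, so the two regimes are patched by taking a maximum of constants. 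The only point worth flagging is a mismatch with the displayed statement: you prove $\no\gamma(\c,\cdot)-\gamma(\c^*(\c),\cdot)\no_{L_2}^2\leq C_1\no\c-\c^*(\c)\no^2\leq C_1C_2\left(R(\c)-R(\c^*(\c))\right)$, i.e.\ the squared-norm version, which is what \textbf{MA($\kappa$)} with $\kappa=1$ requires and what the cited reference states; the inequality as printed in the lemma, with the unsquared $L_2$ norm bounded by $C_1\no\c-\c^*(\c)\no^2$, cannot hold for $\c$ close to $\c^*(\c)$ since the $L_2$ difference scales linearly (not quadratically) in $\no\c-\c^*(\c)\no$, so it should be read as a typo that your version corrects. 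With that reading, your argument is complete up to routine details (e.g.\ shrinking the radii below half the minimal separation of $\mathcal{M}$ so that $\c^*(\c)$ is the nearest optimum, which you do note).
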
                
Lemma \ref{lemap} ensures a margin assumption \textbf{MA($\kappa$)} with $\kappa=1$ (see Section \ref{s:mainresult}). It is useful to derive fast rates of convergence. Recently, \cite{levrard} has pointed out sufficient conditions to have \textbf{(PRC)} as follows. Denote $\partial V_i$ the boundary of the Voronoi cell $V_i$ associated with $c_i$, for $i=1, \ldots, k$. Then, a sufficient condition to have \textbf{(PRC)} is to control the sup-norm of $f$ on the union of all possible $|\mathcal{M}|$ boundaries $\partial V^{*,m}=\cup_{i=1}^k\partial V^{*,m}_i$, associated with $c^*_m\in\mathcal{M}$ as follows:
$$
\|f_{|\cup_{m=1}^\mathcal{M} \partial V^{*,m}}\|_\infty\leq c(d)M^{d+1}\inf_{m=1,\ldots,|\mathcal{M}|,i=1,\ldots k}P(V_i^{*,m}),
$$
where $c(d)$ is a constant depending on the dimension $d$. As a result, the margin assumption is guaranteed when the source distribution $P$ is well concentrated around its optimal clusters, which is related to well-separated classes. From this point of view, the margin assumption \textbf{MA($\kappa$)} can be related to the margin assumption in binary classification.\\
We are now ready to state the main result of this section.
\begin{theorem}
\label{thm:application}
Assume \textbf{(NA)} holds, $P$ satisfies \textbf{(PRC)} with density $f\in\mathcal{H}(s,L)$ and $\E\no\epsilon\no^2<\infty$. Then, for any $t>0$, for any $n\geq n_0(t)$, denoting by $\hat{\mathbf{c}}^\lambda_n$ a solution of (\ref{noisykmeans}), we have with probability higher than $1-e^{-t}$:
              \beqnn 
R(\hat{\c}^\lambda_n)\leq \inf_{\c\in\mathcal{C}}R(\c) +C\sqrt{\log\log (n )}n^{-\frac{1}{1+\sum_{j=1}^d\beta_j/s_j}},
\eeqnn
where $C>0$ is independent of $n$ and $\lambda=(\lambda_1,\ldots,\lambda_d)$ is chosen as:
$$
\lambda_j\approx n^{-\frac{1}{2s_j(1+\rho+\sum_{j=1}^d\beta_j/s_j)}},\forall j=1,\ldots d.
$$.
                  \end{theorem}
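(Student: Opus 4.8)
The plan is to read the noisy $k$-means minimization (\ref{noisykmeans}) as a special case of the deconvolution ERM (\ref{dermintro}), with $\mathcal{G}=\mathcal{C}$, $\ell=\gamma$ and $\ell_\lambda=\gamma_\lambda$, and then invoke Theorem \ref{mainest}. First I would check its hypotheses one by one. Assumption \textbf{(NA)} and $f\in\mathcal{H}(s,L)$ are assumed, and one fixes once and for all a product kernel $\mathcal{K}=\prod_j\mathcal{K}_j$ satisfying \textbf{(K1)} and \textbf{K($m$)} with $m=\lfloor s\rfloor$; such kernels exist, e.g.\ built from the \emph{sinc} kernel as in \citet{booktsybakov}. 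Since $\no X\no_\infty\le M$ one takes $\mathcal{C}\subseteq\mathcal{B}(0,M)$, so $\gamma(\c,X)\le 4M^2$ is bounded; then Lemma \ref{lemap} --- whose proof uses exactly \textbf{(PRC)} and $\no X\no_\infty\le M$ --- yields the margin assumption \textbf{MA($\kappa$)} with $\kappa=1$ and a constant $\kappa_0$ depending on $M$, $d$ and the smallest eigenvalue of the Hessians $\mathbf{c}\mapsto P\gamma(\mathbf{c},\cdot)$ at the (finitely many) optimal vectors.

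The only assumption of Theorem \ref{mainest} not available for free is the bracketing-entropy bound (\ref{complexity}), and here I would exploit that $\{\gamma(\c,\cdot):\c\in\mathcal{C}\}$ is a finite-dimensional parametric class. The map $\c\mapsto\gamma(\c,\cdot)$ is Lipschitz from the compact $\mathcal{C}\subset\R^{dk}$ into $L_2$, uniformly over $\no x\no_\infty\le M$, so an $\varepsilon$-net of $\mathcal{C}$ furnishes an $\varepsilon$-bracketing of the loss class of cardinality $(cM/\varepsilon)^{dk}$; hence $\mathcal{H}_B(\{\gamma(\c),\c\in\mathcal{C}\},\varepsilon,L_2)\le c\,dk\log(1/\varepsilon)$, which is $\smallO{\varepsilon^{-2\rho}}$ for \emph{every} $\rho>0$. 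Theorem \ref{mainest} then applies with $\kappa=1$ and any $\rho>0$, giving, for $n\ge n_0(t)$ and with probability $\ge1-e^{-t}$,
$$
R(\hat{\c}^\lambda_n)\le\inf_{\c\in\mathcal{C}}R(\c)+C_\rho\,n^{-\frac{1}{1+\rho+\sum_{j=1}^d\beta_j/s_j}},
$$
with $\lambda_j\approx n^{-\frac{1}{2s_j(1+\rho+\sum_j\beta_j/s_j)}}$, which is precisely the bandwidth displayed in the statement.

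It remains to clear the spurious $\rho$ from the exponent, and this is where the $\sqrt{\log\log n}$ factor is born. The principled route is to reopen the concentration/localization step in the proof of Theorem \ref{mainest} (via Theorem \ref{concentration}) and feed in the logarithmic bracketing entropy directly, rather than the power bound $c\varepsilon^{-2\rho}$: the fixed-point equation defining the localized radius then carries a $\sqrt{\log}$ modulus instead of a pure power, and solving it together with the deconvolution bias--variance trade-off of Theorem \ref{mainest} (bias of order $\sum_j\lambda_j^{s_j}$, variance inflated by $\prod_j\lambda_j^{-2\beta_j}/n$, cf.\ Lemma \ref{lipbest}) produces the rate $\sqrt{\log\log n}\,n^{-1/(1+\sum_j\beta_j/s_j)}$. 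The moment hypothesis $\E\no\epsilon\no^2<\infty$ enters at this stage to bound the contribution of the unbounded $Z_i=X_i+\epsilon_i$ to the convoluted loss $\gamma_\lambda(\c,Z_i)$ --- recall $X$ is compactly supported but $Z$ is not --- and, as in \citet{pollard81}, to ensure that the truncated risk $R_K$ agrees with $R$ for $K=\mathcal{B}(0,M+1)$ up to a negligible term and that $\mathcal{M}$ is finite.

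The main obstacle is exactly this last paragraph: Theorem \ref{mainest} is stated for a \emph{fixed} $0<\rho<1$ with $C_\rho$ and $n_0(t)$ degenerating as $\rho\downarrow0$, so one cannot simply set $\rho=0$; one must genuinely redo the empirical-process bound in the logarithmic-entropy regime, and carefully counting the localization iterations there is what brings in the iterated logarithm. A minor additional point is to make the constants $C_1,C_2$ of Lemma \ref{lemap} uniform over $\mathcal{C}\subseteq\mathcal{B}(0,M)$ and over the finitely many $\c^*\in\mathcal{M}$, which is where the positive-definiteness of the Hessian at \emph{all} optimal $\c^*$ and a union bound over $|\mathcal{M}|$ are used.
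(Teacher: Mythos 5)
Your overall architecture (read (\ref{noisykmeans}) as a deconvolution ERM, get \textbf{MA($\kappa$)} with $\kappa=1$ from Lemma \ref{lemap} under \textbf{(PRC)}, observe that Theorem \ref{mainest} as stated only covers $\rho\in(0,1)$, and locate the $\sqrt{\log\log n}$ in the count of localization iterations) matches the paper's intent, but the step that carries all the weight — the empirical-process bound in the $\rho=0$ regime — is handled in a way that does not deliver the stated rate. You propose to feed the parametric bracketing entropy $\mathcal{H}_B(\{\gamma(\c,\cdot)\},\epsilon,L_2)\lesssim dk\log(1/\epsilon)$ into the localization. A Dudley/bracketing chaining bound with logarithmic entropy over a localized ball of $L_2$-radius $\sqrt{\delta}$ gives a local modulus of order $\lambda^{-\beta}\sqrt{\delta\log(1/\delta)}/\sqrt{n}$, so the fixed point becomes $\delta\asymp\lambda^{-2\beta}\log(1/\delta)/n$ and, after the bias--variance trade-off in $\lambda$, the rate carries an extra power of $\log n$ — not the claimed $\sqrt{\log\log n}$, which it dominates. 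In other words, your "principled route" as written proves a strictly weaker bound; the $\sqrt{\log}$ modulus you introduce is exactly what must be avoided.

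The paper avoids it by never using a generic entropy bound for the $k$-means class. It observes that condition (\ref{complexity}) in Theorem \ref{mainest} can be replaced by the direct local complexity control (\ref{localcomp}), and establishes (\ref{localcomp}) with $\rho=0$ and \emph{no} logarithmic factor in Lemma \ref{complexitync}: the loss increment $\gamma_\lambda(\c,z)-\gamma_\lambda(\c^*,z)$ is split, following Pollard, into a linear term $\langle \c-\c^*,\nabla_{\c}\gamma_\lambda(\c^*,z)\rangle$ — handled by a subgaussian maximal inequality over the finitely many $\c^*\in\mathcal{M}$, with variance controlled by $\Pi_i\lambda_i^{-2\beta_i}\delta/n$ via Plancherel as in Lemma \ref{lip} — and a remainder $\|\c-\c^*\|R_\lambda(\c^*,\c-\c^*,z)$ treated by a chaining argument à la \citet{levrard} with envelope $C(\Pi_i\lambda_i^{-\beta_i}+\|\cdot\|)\in L_2(\tilde P)$; this is precisely where $\E\|\epsilon\|^2<\infty$ is used (not merely to compare $R_K$ with $R$). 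With this clean $C\lambda^{-\beta}\sqrt{\delta/n}$ modulus, the only source of the extra factor is that at $\rho=0$ the term $\sqrt{t/n}\,\sigma_\lambda(\delta)$ in Bousquet's bound is of the same order in $\delta$ as the mean term, so running Lemma \ref{lemmaest} at level $t'=t+\log\log_q n$ (the union bound over the $\asymp\log n$ levels $\delta_j=q^{-j}$) inflates the fixed point by $1+\sqrt{t'}\asymp\sqrt{\log\log n}$. So to repair your proof you must replace the net/bracketing estimate by a structure-specific local complexity bound of the type of Lemma \ref{complexitync}; without it, the argument as proposed only yields a rate with an extra $\log n$ power.
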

The proof is postponed to Section \ref{proofs}. Here follows some remarks.
\begin{remark}[Fast rates of convergence]
Theorem \ref{thm:application} is a direct application of Theorem \ref{mainest} in Section \ref{s:mainresult}. The order of the residual term in Theorem \ref{thm:application} is comparable to Theorem \ref{mainest}. Due to the finite dimensional hypothesis space $\mathcal{C}\subset\R^{dk}$, we apply the previous study to the case $\rho=0$. It leads to the fast rates $O\left(n^{-\frac{1}{1+\sum_{i=1}^d\beta_j/s_j}}\right)$, up to an extra $\sqrt{\log\log n}$ term. This term is due to the localization principle of the proof, which consists in applying iteratively the concentration inequality of Theorem \ref{concentration}. In the finite dimensional case, when $\rho=0$, we pay an extra $\sqrt{\log\log n}$ term in the rate by solving the fixed point equation. Note that using for instance \cite{levrard}, this term can be avoid. It is out of the scope of the present paper.
\end{remark}
\begin{remark}[Optimality]
Lower bounds of the form $\mathcal{O}(1/\sqrt{n})$ have been stated in the direct case by \citet{lbclustering} for general distribution. An open problem is to derive lower bounds in the context of Theorem \ref{thm:application}. For this purpose, we need to construct configurations where both Pollard's regularity assumption and noise assumption \textbf{(NA)} could be used in a careful way. In this direction, \citet{pinkfloyds} suggests lower bounds in a supervised framework under both margin assumption and \textbf{(NA)}.
\end{remark}
\section{Conclusion}
\label{conclusion}
This paper can be seen as a first attempt into the study of quantization with errors-in-variables. Many problems could be considered in future works, from theoretical or practical point of view.

In the problem of risk minimization with noisy data, we provide oracle inequalities for an empirical risk minimization based on a deconvolution kernel. The risk of the deconvolution ERM mimics the risk of the oracle, up to some residual term, called the rate of convergence. The order of these rates depends on the complexity of the hypothesis space in terms of entropy, the behaviour of the density $f$ and the degree of ill-posedness. From the theoretical point of view, these results extend the previous study of \cite{loustau12} to the unsupervised framework, the non-exact case and to an anisotropic behaviour of the density $f$. These significant extensions could be the core of many applications in unsupervised learning.

As an example, we turn into the problem of clustering with $k$-means. We consider the general approach and introduce a deconvolution kernel estimator of the density $f$ in the distortion. It gives rise to a new stochastic minimization called deconvolution $k$-means. The method gives fast rates of convergence.

Another possible direct application of the result of this paper is to learn principal curves in the presence of noisy observations. In such a problem, the aim is to design a principal curve for an unknown distribution $P$ when we have at our disposal a noisy dataset $Z_i=X_i+\epsilon_i$, $i=1, \ldots ,n$. To the best of our knowledge, this problem has not been considered in the literature. Following the ERM approach of this paper, it is possible to design a new procedure to state rates of convergence in the presence of noisy observations. 

The general deconvolution ERM principle introduced in this paper can be used to design new algorithms to deal with unsupervised statistical learning with noisy observations. As a first step, the construction of a noisy version of the well-known $k$-means is a core of a future work. The construction of a noisy version of the Polygonal Line Algorithm (see \cite{polygonalalgorithm}) could also be investigated, to deal with learning principal curves from indirect observations.
\section{Proofs}
\label{proofs}
The main probabilistic tool for our needs is the following concentration inequality due to Bousquet.
\begin{theorem}[\cite{bousquet}]
\label{concentration}
Let $\mathcal{G}$ a countable class of real-valued measurable functions defined on a measurable space $\mathcal{X}$. Let $X_1,\ldots, X_n$ be $n$ i.i.d. random variables with values in $\mathcal{X}$. Let us consider the random variable:
$$
Z_n(\mathcal{G})=\sup_{g\in\mathcal{G}}\left|\frac{1}{n}\sum_{i=1}^n g(X_i)-\E g(X_1)\right|.
$$
Then, for every $t>0$:
\beqnn
\P(Z_n(\mathcal{G})\geq U_n(\mathcal{G},t))\leq e^{-t},
\eeqnn
where:
$$
U_n(\GG,t)=\E Z_n(\mathcal{G})+\sqrt{\frac{2t}{n}\left[\sigma^2(\GG)+(1+b(\GG))\E Z_n(\GG)\right]}+\frac{t}{3n},
$$
and
$$
\sigma^2(\GG)= \sup_{g\in\GG}\E g(X_1)^2\mbox{ and }b(\GG)=\sup_{g\in\mathcal{G}}\no g\no_{\infty}.
$$
\end{theorem}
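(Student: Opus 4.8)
The plan is to prove the inequality by the \emph{entropy method}, in the form developed by Ledoux, Massart, Rio and Bousquet; the countability of $\GG$ is used throughout so that all suprema below are measurable and can be approached by finite sub‑suprema. First I would reduce to a one‑sided supremum of centered functions. Writing $\bar g = g-\E g(X_1)$ and $\mathcal H=\{\bar g: g\in\GG\}\cup\{-\bar g: g\in\GG\}$, one has $Z_n(\GG)=\sup_{h\in\mathcal H}\frac1n\sum_{i=1}^n h(X_i)$, and the parameters of $\mathcal H$ are dominated by those of $\GG$: $\sup_{h\in\mathcal H}\E h(X_1)^2\le\sigma^2(\GG)$ and the pointwise oscillations are controlled by $b(\GG)$. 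Put $S=\sup_{h\in\mathcal H}\sum_{i=1}^n h(X_i)=nZ_n(\GG)$; it then suffices to control the upper tail of $S$, equivalently its centered log‑Laplace transform $\psi(\lambda)=\log\E e^{\lambda(S-\E S)}$ for $\lambda>0$.

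The engine is the tensorization inequality for entropy: with $\mathrm{Ent}(Y)=\E[Y\log Y]-\E Y\log\E Y$ and $\mathrm{Ent}^{(i)}$ the analogue conditional on $(X_k)_{k\neq i}$,
\[
\mathrm{Ent}\!\left(e^{\lambda S}\right)\le\sum_{i=1}^n\E\!\left[\mathrm{Ent}^{(i)}\!\left(e^{\lambda S}\right)\right].
\]
For each $i$ introduce $S^{(i)}=\sup_{h\in\mathcal H}\sum_{k\neq i}h(X_k)$, a function of $(X_k)_{k\neq i}$, and $\delta_i=S-S^{(i)}\ge0$. The variational formula for entropy, used with the trial value $e^{\lambda S^{(i)}}$, gives $\mathrm{Ent}^{(i)}(e^{\lambda S})\le\E^{(i)}[e^{\lambda S}\,\phi(-\lambda\delta_i)]$ with $\phi(u)=e^u-u-1$, hence $\mathrm{Ent}^{(i)}(e^{\lambda S})\le\frac{\lambda^2}{2}\E^{(i)}[e^{\lambda S}\delta_i^2]$ since $\phi(u)\le u^2/2$ for $u\le0$. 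The decisive step — the one responsible for the sharp constant — is to bound $\sum_i\E^{(i)}[e^{\lambda S}\delta_i^2]$ by the right variance proxy. Here one uses that if $h^\star$ nearly attains the supremum defining $S$ then $S^{(i)}\ge S-h^\star(X_i)$, whereas if $h^{\star,(i)}$ attains that defining $S^{(i)}$ then $S\ge S^{(i)}+h^{\star,(i)}(X_i)$; consequently $h^{\star,(i)}(X_i)\le\delta_i\le h^\star(X_i)$, and together with $|h|\le b(\GG)$ and $\E^{(i)}h^\star(X_i)^2\le\sigma^2(\GG)$ this yields, after summing in $i$, a bound of the form
\[
\sum_{i=1}^n\E^{(i)}\!\left[\delta_i^2\, e^{\lambda S}\right]\le\left(n\sigma^2(\GG)+(1+b(\GG))\,\E S\right)\cdot(\mbox{a controllable exponential weight}),
\]
the weight being absorbed in a self‑improving (Herbst‑type) argument.

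Feeding this back into tensorization produces a Bennett‑type differential inequality for $\psi$, namely $\psi(\lambda)\le\frac{v}{b^2}\phi(b\lambda)$ with $b=b(\GG)$ (equal to $1$ under the normalization implicit in the statement, which is exactly what reconciles the factor $(1+b)$ with the classical $2$ and the term $t/(3n)$) and $v=n\sigma^2(\GG)+(1+b(\GG))\,\E S$; this comes out of the Herbst argument, integrating $\bigl(\psi(\lambda)/\lambda\bigr)'=\mathrm{Ent}(e^{\lambda S})/(\lambda^2\,\E e^{\lambda S})$ from $0$. Taking the Legendre transform of the Bennett function and then relaxing it to the weaker Bernstein form $\sqrt{2vt}+bt/3$ gives $\P\bigl(S-\E S\ge\sqrt{2vt}+bt/3\bigr)\le e^{-t}$; dividing by $n$ and using $\E S=n\E Z_n(\GG)$ and $S=nZ_n(\GG)$ turns this into precisely $\P(Z_n(\GG)\ge U_n(\GG,t))\le e^{-t}$.

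The main obstacle is the per‑coordinate analysis, and within it the estimate $\sum_i\E[\delta_i^2 e^{\lambda S}]\le v$ without loss in the constant $2$ of $\sqrt{2vt}$: this is exactly where Bousquet improves on the earlier inequalities of Massart and Rio, and it rests on the two‑sided comparison $h^{\star,(i)}(X_i)\le\delta_i\le h^\star(X_i)$, on a rigorous handling of the "supremum‑attaining function" through finite approximations (legitimate because $\GG$, hence $\mathcal H$, is countable), and on careful bookkeeping of the boundedness and second‑moment parameters against the exponential weight $e^{\lambda S}$. By contrast, the initial reduction to centered one‑sided suprema, the tensorization inequality, and the Herbst integration are standard.
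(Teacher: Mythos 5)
The paper does not actually prove this theorem: it is imported from Bousquet (2002), with only the remark that the proof uses the entropy method of Ledoux as refined by Massart and Rio. Your outline follows exactly that route (centering and symmetrization to a one-sided supremum, tensorization of entropy, the variational bound with trial value $e^{\lambda S^{(i)}}$, the two-sided comparison $h^{\star,(i)}(X_i)\le\delta_i\le h^\star(X_i)$, Herbst integration, Bennett-to-Bernstein relaxation), so there is no divergence of method to discuss.

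As a proof, however, your proposal has a genuine gap at precisely the point you yourself flag as decisive. After invoking $\phi(u)\le u^2/2$ you assert a bound $\sum_i\E^{(i)}[\delta_i^2e^{\lambda S}]\le\bigl(n\sigma^2(\mathcal{G})+(1+b(\mathcal{G}))\E S\bigr)\cdot(\mbox{controllable weight})$, with the weight ``absorbed in a self-improving (Herbst-type) argument'', and from this a Bennett-type differential inequality $\psi(\lambda)\le\frac{v}{b^2}\phi(b\lambda)$. Neither step is established, and neither follows by routine manipulation: the near-maximizer $h^\star$ depends on all of $X_1,\ldots,X_n$, so $\E^{(i)}[h^\star(X_i)^2e^{\lambda S}]$ cannot simply be decoupled into $\sigma^2(\mathcal{G})\,\E e^{\lambda S}$ plus a term proportional to $\E S$; achieving this decoupling with no loss in the constant $2$ of $\sqrt{2vt}$ is the actual content of Bousquet's theorem (and of Rio's earlier argument). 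Moreover, once $\phi$ has been replaced by its quadratic majorant you cannot recover the Bennett function $\phi(b\lambda)$ on the right-hand side; Bousquet's proof keeps $\phi$ and exploits its convexity (e.g. $\phi(-\lambda\delta)\le\delta\,\phi(-\lambda)$ for $0\le\delta\le 1$) together with a self-bounding property of $\sum_i\delta_i$, which is absent from your sketch. Finally, the claim that the factor $(1+b(\mathcal{G}))$ and the term $t/(3n)$ follow from a ``normalization implicit in the statement'' is unsupported: the statement imposes no bound $b(\mathcal{G})\le 1$, and the passage from the inequality for centered functions bounded by $1$ to the constants displayed here (rather than, say, $2b\,\E Z_n$ and $bt/(3n)$) would have to be written out explicitly. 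In short, the architecture is the right one --- and the one the paper points to via its citation --- but the quantitative heart of the argument is asserted rather than proved.
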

The proof of this result uses the so-called entropy method introduced by \cite{ledoux}, and further refined by \cite{massart} or \cite{rio}. The use of a $\psi_1$-version (see for instance \cite{adamczak}) has been considered in \cite{lm09}, to alleviate the boundedness assumption. 

This concentration inequality is at the core of the localization principle presented in  \cite{kolt}, which consists in using Theorem \ref{concentration} to functions in $\mathcal{G}$ with small error. In the following, we extend this localization approach to:
\begin{itemize}
\item the noisy set-up,
\item the non-exact case.
\end{itemize}
For this purpose, we apply Theorem \ref{concentration} to particular classes $\mathcal{G}$, namely excess loss classes for the exact case and loss classes for the non-exact case. These two extensions are proposed in Lemma \ref{lemmaest} and \ref{lemmapred} below. These results are at the core of the general exact and non-exact oracle inequalities of Theorem \ref{mainest} and Theorem \ref{mainpred} in Section \ref{s:mainresult}.
\subsection{Intermediate lemmas}
\subsubsection{Notations}
Let us first introduce the following notations. For any fixed $g\in\mathcal{G}$, we write:
$$
R^\lambda(g)=\int_K\ell(g,x)\E_P\frac{1}{\lambda}\mathcal{K}\left(\frac{X-x}{\lambda}\right)dx\mbox{ and }R_n^\lambda(g)=\frac{1}{n}\sum_{i=1}^n\ell_\lambda(g,Z_i).
$$
As a result, for any fixed $g\in\mathcal{G}$, we have the following equality:
$$
R_n^\lambda(g)-R^\lambda(g)=\frac{1}{n}\sum_{i=1}^n\ell_\lambda(g,Z_i)-\E_{\tilde P}\ell_\lambda(g,Z).
$$
With a slight abuse of notations, we also denote:
$$
(R_n^\lambda-R^\lambda)(g-g')=R_n^\lambda(g)-R^\lambda(g)-R_n^\lambda(g')+R^\lambda(g').
$$
The same notation is used for $R^\lambda(\cdot)$ and $R(\cdot)$ with the quantity $(R-R^\lambda)(g-g')$.

For a function $\psi:\R_+\to\R_+$, the following transformations will be considered:
$$
\breve{\psi}(\delta)=\sup_{\sigma\geq \delta}\frac{\psi(\sigma)}{\sigma}\mbox{ and }\psi^\dagger(\epsilon)=\inf\{\delta>0:\breve{\psi}(\delta)\leq \epsilon\}.
$$
Moreover, we need the following property (see \cite{kolt}):
\beqn
\label{koltprop}
\forall\delta'\leq \delta,\, \psi(\delta)\leq \delta\breve{\psi}(\delta').
\eeqn
We are also interested in the following discretization version of these transformations:
$$
\breve{\psi}_q(\delta)=\sup_{\delta_j\geq \delta}\frac{\psi(\delta_j)}{\delta_j}\mbox{ and }\psi^\dagger_{q}(\epsilon)=\inf\{\delta>0:\breve{\psi}_{q}(\delta)\leq \epsilon\},
$$
where for some $q>1$, $\delta_j=q^{-j}$ for $j\in\mathbb{N}^*$.

Finally, in the sequel, constants $K,C>0$ denote generic constants that may vary from line to line. 
\subsubsection{Exact case}
The proof of Theorem \ref{mainest} uses the following intermediate lemma. 
\begin{lemma}[Exact case]
\label{lemmaest}
Suppose there exists some function $a: \lambda\mapsto a(\lambda)$ and a constant $0<r<1$ such that:
\beqn
\label{biassumption}
\forall g\in\mathcal{G},\,\left|(R-R^\lambda)(g-g^*(g))\right|\leq a(\lambda)+r(R(g)-R(g^{*}(g))),
\eeqn
where $g^*(g)\in\arg\min_h R(h)$ can depend on $g$.\\
Then, for any $q>1$, $\forall \delta\geq \bar{\delta}_{\lambda}(t)$, we have:
\beqnn
\mathbb{P}(R(\hat{g}_n^\lambda)\geq \inf_{g\in\mathcal{G}}R(g)+ \delta)\leq\log_q\left(\frac{1}{\delta}\right) e^{-t},
\eeqnn
where:
$$
\bar{\delta}_\lambda(t)=\max\left(\delta_\lambda(t),\frac{8q}{1-r}a(\lambda)\right),
$$
for $\delta_\lambda(t)=(U_\lambda(\cdot,t))^{\dagger}\left((1-r)/4q\right)$
and where we define, for some constant $K>0$:
\beqnn
U_{\lambda}(\delta,t):=K\left[\E Z_\lambda(\delta)+\sqrt{\frac{t}{n}}\sigma_\lambda(\delta)+\sqrt{\frac{t}{n}\left(1+2b_\lambda(\delta)\right)\E Z_\lambda(\delta)}+\frac{t}{3n}\right],
\eeqnn
where 
\beqnn
Z_\lambda(\delta):=\sup_{g,g'\in\GG(\delta)}\left|(R_n^\lambda-R^\lambda)(g-g')\right|,
\eeqnn
\beqnn
\sigma_\lambda(\delta):=\sup_{g,g'\in\GG (\delta)}\sqrt{\E_{\tilde P}(\ell_\lambda(g,Z)-\ell_\lambda(g',Z))^2},
\eeqnn
\beqnn
b_\lambda(\delta):=\sup_{g\in\mathcal{G}(\delta)}\no\ell_\lambda(g,\cdot)\no_{\infty}.
\eeqnn
\end{lemma}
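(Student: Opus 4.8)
The plan is to follow the classical peeling/localization scheme of Koltchinskii, adapted to the noisy setting via the auxiliary risk $R^\lambda$. First I would start from the basic optimality inequality: since $\hat g_n^\lambda$ minimizes $R_n^\lambda$, for any fixed $g^*\in\arg\min_{\mathcal{G}}R$ we have $R_n^\lambda(\hat g_n^\lambda)\le R_n^\lambda(g^*)$, hence
\[
R(\hat g_n^\lambda)-R(g^*)\le (R-R^\lambda)(\hat g_n^\lambda-g^*)+(R^\lambda-R_n^\lambda)(\hat g_n^\lambda-g^*).
\]
The first term is the bias-type term, controlled by hypothesis (\ref{biassumption}) by $a(\lambda)+r(R(\hat g_n^\lambda)-R(g^*))$ (with $g^*=g^*(\hat g_n^\lambda)$ the appropriate minimizer). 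The second term is the stochastic term $(R^\lambda-R_n^\lambda)(\hat g_n^\lambda-g^*)$, which I bound by $Z_\lambda(\delta)$ on the event $\{R(\hat g_n^\lambda)-R(g^*)\le\delta\}$ since then $\hat g_n^\lambda,g^*\in\mathcal{G}(\delta)$. So on that event,
\[
(1-r)\bigl(R(\hat g_n^\lambda)-R(g^*)\bigr)\le a(\lambda)+Z_\lambda(\delta).
\]

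Next I would apply Bousquet's inequality (Theorem \ref{concentration}) to the (separable version of the) class $\{(\ell_\lambda(g,\cdot)-\ell_\lambda(g',\cdot)):g,g'\in\mathcal{G}(\delta)\}$ evaluated at the i.i.d.\ variables $Z_i$, to get that with probability at least $1-e^{-t}$, $Z_\lambda(\delta)\le U_\lambda(\delta,t)$ for the stated $U_\lambda$, where $\sigma_\lambda(\delta)$ and $b_\lambda(\delta)$ are exactly the variance and sup-norm parameters appearing in Bousquet's bound. This gives, on the intersection of the two events,
\[
(1-r)\bigl(R(\hat g_n^\lambda)-R(g^*)\bigr)\le a(\lambda)+U_\lambda(\delta,t).
\]
The peeling step then handles the fact that $\delta$ is random. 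I would slice $(0,1]$ into shells $[\delta_{j+1},\delta_j)$ with $\delta_j=q^{-j}$, and on the shell where $R(\hat g_n^\lambda)-R(g^*)$ lies, use monotonicity of $\delta\mapsto\breve U_\lambda(\delta,t)$ together with property (\ref{koltprop}): if $R(\hat g_n^\lambda)-R(g^*)\ge\delta$ and $\delta\ge\bar\delta_\lambda(t)$, then $U_\lambda(\delta_j,t)\le\delta_j\,\breve U_\lambda(\delta_j,t)\le\delta_j\,\breve U_\lambda(\delta_\lambda(t),t)\le\delta_j(1-r)/(4q)$. Combining with $(1-r)\delta\le a(\lambda)+U_\lambda(\delta,t)$, and using $a(\lambda)\le (1-r)\delta/(8q)\cdot(8q/(1-r))\cdot\ldots$ — more precisely $\frac{8q}{1-r}a(\lambda)\le\delta$ — yields a contradiction on each shell, so the union bound over the at most $\log_q(1/\delta)$ relevant shells gives the claimed probability $\log_q(1/\delta)\,e^{-t}$.

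The main obstacle I expect is bookkeeping the constants through the peeling argument so that the two contributions (the bias $a(\lambda)$ and the variance term $U_\lambda$) each get absorbed into a fraction of $\delta$, which is precisely why $\bar\delta_\lambda(t)$ is defined as the max of $\delta_\lambda(t)=(U_\lambda(\cdot,t))^\dagger((1-r)/4q)$ and $\frac{8q}{1-r}a(\lambda)$: the first controls the stochastic part via the $\dagger$-transform inverting $\breve U_\lambda$, and the second forces $a(\lambda)$ to be a small fraction of $\delta$. A secondary technical point is the passage to a countable subclass so that Bousquet's theorem applies (standard separability of $\mathcal{G}$ under the $L_2(\tilde P)$ pseudo-metric, or a direct countable dense subset argument), and making sure $U_\lambda(\delta,t)$ dominates $\E Z_\lambda(\delta)$ plus the subroot correction terms with a single constant $K$ — this is where the slightly inflated $(1+2b_\lambda(\delta))$ factor comes from when squaring out $(1+b(\GG))$ in Theorem \ref{concentration}. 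Once these are in place the result follows by assembling the inequalities above.
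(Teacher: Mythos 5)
Your proposal is correct and follows essentially the same route as the paper: the optimality inequality for $\hat g_n^\lambda$, the bias assumption (\ref{biassumption}) to absorb $(R-R^\lambda)$, Bousquet's inequality on each shell $\mathcal{G}(\delta_j)$ to get $Z_\lambda(\delta_j)\leq U_\lambda(\delta_j,t)$, then property (\ref{koltprop}) and the $\dagger$-transform together with the condition $\delta\geq \frac{8q}{1-r}a(\lambda)$ to exclude each shell, and a union bound over the $\log_q(1/\delta)$ shells. The only cosmetic difference is that you compare $\hat g_n^\lambda$ to the exact minimizer $g^*(\hat g_n^\lambda)$, while the paper compares to an $\epsilon$-approximate minimizer $g\in\mathcal{G}(\epsilon)$ with $\epsilon<c\delta_{j+1}$ (hence its $2a(\lambda)+(1+r)\epsilon$ terms); both choices lead to the same bound up to constants.
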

\begin{proof} The proof follows \cite{kolt} extended to the noisy set-up.

Given $q>1$, we introduce a sequence of positive numbers:
\beqnn
\delta_j=q^{-j},\,\forall j\geq 1.
\eeqnn
Given $n,j\geq 1$, $t>0$ and $\lambda\in\R^d_+$, consider the event:
\beqnn
E_{\lambda,j}(t)=\left\{Z_\lambda(\delta_j)\leq U_{\lambda}(\delta_j,t)\right\}.
\eeqnn
Then, we have, using Theorem \ref{concentration}, for some $K>0$, $\P(E_{\lambda,j}(t)^C)\leq e^{-t}$, $\forall t> 0$.\\
We restrict ourselves to the event $E_{\lambda,j}(t)$. \\
Let $\epsilon<c\delta_{j+1}$ where $c>0$ is chosen later on. Then, consider some $g\in\mathcal{G}(\epsilon)$, where:
$$
\mathcal{G}(\epsilon)=\{g\in\mathcal{G}:R(g)-\inf_{g\in\mathcal{G}}R(g)\leq \epsilon\}.$$
Using assumption (\ref{biassumption}) and the definition of $\hat g:=\hat{g}_n^\lambda$, one has:
\beqnn
R(\hat g)-\inf_{g\in\mathcal{G}}R(g)&\leq& R(\hat g)-R(g)+\epsilon\\
&\leq & (R-R^\lambda)(\hat g-g)+(R^\lambda-R_n^\lambda)(\hat g-g)+\epsilon\\
&\leq & (R^\lambda-R_n^\lambda)(\hat g-g)+2a(\lambda)+r(R(\hat g)-\inf_{g\in\mathcal{G}}R(g))+r(R(g)-\inf_{g\in\mathcal{G}}R(g))+\epsilon\\
\eeqnn
Hence, we have the following assertion:
\beqnn
\delta_{j+1}\leq R(\hat{g})-\inf_{g\in\mathcal{G}}R(g)\leq \delta_j\Rightarrow \delta_{j+1}\leq \frac{1}{1-r}\left((R_n^\lambda-R^\lambda)(g-\hat g)+2a(\lambda)+(1+r)\epsilon\right).
\eeqnn
On the event $E_{\lambda,j}(t)$, it follows that $\forall \delta\leq\delta_j$:
\beqnn
\delta_{j+1}\leq R(\hat{g})-\inf_{g\in\mathcal{G}}R(g)\leq \delta_j\Rightarrow\delta_{j+1}&\leq& \frac{1}{1-r}\left(U_{\lambda}(\delta_j,t)+2a(\lambda)+(1+r)\epsilon\right)\\
&\leq &  \frac{1}{1-r}\left(\delta_jV_{\lambda}(\delta,t)+2a(\lambda)(1+r)\epsilon\right),
\eeqnn
where $V_{\lambda}(\delta,t)=\breve{U}_{\lambda}(\delta,t)$ satisfies property (\ref{koltprop}). We obtain, for any $\delta\leq \delta_j$:
\beqnn
\frac{1}{1-r}V_{\lambda}(\delta,t)\geq \frac{1}{q}-\frac{q^j(2a(\lambda)+(1+r)\epsilon)}{1-r}.
\eeqnn
The assumption $a(\lambda)\leq (1-r)\delta/8q$ and the choice of $c=\frac{1-r}{4(1+r)}$ in the beginning of the proof gives the following lower bound:
$$
V_{\lambda}(\delta,t)>\frac{1-r}{2q}.
$$
It follows from the definition of the $\dagger$-transform that:
\beqnn
\delta< [{U}_\lambda(\cdot,t)]^{\dagger}\left(\frac{1-r}{2q}\right)=\delta_\lambda(t).
\eeqnn
Hence, we have on the event $E_{\lambda,j}(t)$, for any $\delta\leq \delta_j$:
\beqnn
\delta_{j+1}\leq R(\hat{g})-\inf_{g\in\mathcal{G}}R(g)\leq \delta_j\Rightarrow\delta< \delta_n^\lambda(t),
\eeqnn
or equivalently,  
\beqnn
\delta_\lambda(t)\leq \delta\leq \delta_j\Rightarrow \hat{g}\notin\GG(\delta_{j+1},\delta_{j}),
\eeqnn
where $\GG(c,C)=\{g\in\mathcal{G}:c\leq R(g)-\inf_{g\in\mathcal{G}}R(g)\leq C\}$. We eventually obtain:
\beqnn
\bigcap_{\delta_j\geq \delta}E_{\lambda,j}(t)\mbox{ and }\delta\geq \delta_\lambda(t)\Rightarrow R(\hat{g})-\inf_{g\in\mathcal{G}}R(g)\leq \delta.
\eeqnn
This formulation allows us to write by union's bound:
\beqnn
\mathbb{P}(R(\hat{g})\geq \inf_{g\in\mathcal{G}}R(g)+\delta)\leq \sum_{\delta_j\geq \delta} \P(E_{\lambda,j}(t)^C)\leq \log_q\left(\frac{1}{\delta}\right)e^{-t},
\eeqnn
since $\{j:\delta_j\geq \delta\}=\{j:j\leq -\frac{\log \delta}{\log q}\}$.
\end{proof}
\subsubsection{The non-exact case}
The proof of Theorem \ref{mainpred} uses the following version of Lemma \ref{lemmaest}.
\begin{lemma}[Non-exact case]
\label{lemmapred}
Suppose there exists $a^*(\cdot,\cdot):(r,\lambda)\in (0,1)\times \R^+\mapsto a^*(r,\lambda)$ such that for any $(r,\lambda)\in (0,1)\times \R_+$:
\beqn
\label{biassumptionpred}
\forall g\in\mathcal{G}, \left|R(g)-R^\lambda(g)\right|\leq a^*(r,\lambda)+rR(g).
\eeqn
Then, for any $q>1$, $\alpha\in (0,1)$, $u\in (0,1/q)$, $ \delta\geq \bar{\delta}'_\lambda(t)$:
\beqnn
\P(R(\hat{g}_n^{\lambda})\geq \delta)\leq\log\frac{1}{\delta}e^{-t},
\eeqnn
where:
$$
\bar{\delta}'_\lambda(t)=\max\left(\delta'_\lambda(t),\frac{2}{(1-r)\alpha u}a^*(r,\lambda),\frac{1+r}{(1-r)(1-\alpha)u}\inf_{g\in\mathcal{G}}R(g)\right)
$$
for
$$
\delta'_\lambda(t)=\left(U'_\lambda(\cdot,t)\right)^{\dagger}\left(\frac{(1-r)(1-qu)}{2q}\right),
$$
and where we define, for some constant $K>0$:
\beqnn
U'_{\lambda}(\delta,t):=K\left[Z'_{\lambda}(\delta)+\sqrt{\frac{t}{n}}\sigma'_\lambda(\delta)+\sqrt{\frac{t}{n}\left(1+b'_\lambda(\delta)\right)\E Z'_\lambda(\delta)}+\frac{t}{3n}\right],
\eeqnn
where here, we write for $\mathcal{G}'(\delta)=\{g\in\mathcal{G}:R(g)\leq \delta\}$:
\beqnn
Z'_\lambda(\delta):= \sup_{g\in\GG'(\delta)}\left|(R_n^\lambda-R^\lambda)(g)\right|,
\eeqnn
\beqnn
\sigma'_\lambda(\delta):=\sup_{g\in\GG' (\delta)}\sqrt{\E_{\tilde P}(\ell_\lambda(g,Z))^2},
\eeqnn
\beqnn
b'_\lambda(\delta):=\sup_{g\in\mathcal{G}'(\delta)}\no \ell_\lambda(g,\cdot)\no_{\infty}.
\eeqnn
\end{lemma}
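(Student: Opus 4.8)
The plan is to transcribe the iterative localization scheme used for Lemma \ref{lemmaest} to the \emph{loss} class $\mathcal{G}'(\delta)=\{g\in\mathcal{G}:R(g)\leq\delta\}$, with the excess-loss bias control (\ref{biassumption}) replaced by its non-exact analogue (\ref{biassumptionpred}). Fix $q>1$ and set $\delta_j=q^{-j}$, $j\geq1$. For each $j$ introduce the event $E'_{\lambda,j}(t)=\{Z'_\lambda(\delta_j)\leq U'_\lambda(\delta_j,t)\}$; applying the concentration inequality of Theorem \ref{concentration} to the class $\{z\mapsto\ell_\lambda(g,z):g\in\mathcal{G}'(\delta_j)\}$, and using $\sqrt{a+b}\leq\sqrt a+\sqrt b$ to split Bousquet's square root, gives $\P(E'_{\lambda,j}(t)^C)\leq e^{-t}$ for an appropriate absolute constant $K$ in the definition of $U'_\lambda$. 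All subsequent estimates are carried out on $E'_{\lambda,j}(t)$.

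The core is the following deterministic implication. Fix a small $\nu>0$ and pick $g\in\mathcal{G}$ with $R(g)\leq\inf_{h\in\mathcal{G}}R(h)+\nu$; the third term of $\bar{\delta}'_\lambda(t)$ forces $\inf_h R(h)\leq\delta_j$ whenever $\delta\leq\delta_j$, so $g\in\mathcal{G}'(\delta_j)$ for $\nu$ small. Suppose $\delta_{j+1}\leq R(\hat{g}_n^\lambda)\leq\delta_j$, so that also $\hat{g}_n^\lambda\in\mathcal{G}'(\delta_j)$, and start from the identity
\[
R(\hat{g}_n^\lambda)=\bigl[R(\hat{g}_n^\lambda)-R^\lambda(\hat{g}_n^\lambda)\bigr]+\bigl[R^\lambda(g)-R(g)\bigr]+(R_n^\lambda-R^\lambda)(g-\hat{g}_n^\lambda)+\bigl[R_n^\lambda(\hat{g}_n^\lambda)-R_n^\lambda(g)\bigr]+R(g).
\]
Since $\hat{g}_n^\lambda$ minimizes $R_n^\lambda$ the fourth bracket is $\leq0$; bounding the first two brackets with (\ref{biassumptionpred}) and the third by $|(R_n^\lambda-R^\lambda)(g-\hat{g}_n^\lambda)|\leq 2Z'_\lambda(\delta_j)\leq 2U'_\lambda(\delta_j,t)$ on $E'_{\lambda,j}(t)$ yields
\[
(1-r)R(\hat{g}_n^\lambda)\leq 2a^*(r,\lambda)+(1+r)R(g)+2U'_\lambda(\delta_j,t).
\]
Using property (\ref{koltprop}) to replace $U'_\lambda(\delta_j,t)$ by $\delta_j\breve{U}'_\lambda(\delta,t)$ for any $\delta\leq\delta_j$, together with $R(\hat{g}_n^\lambda)\geq\delta_{j+1}=\delta_j/q$ and $\delta_j\geq\delta$, and dividing by $\delta_j$, gives
\[
\frac{1-r}{q}\leq\frac{2a^*(r,\lambda)}{\delta}+\frac{(1+r)R(g)}{\delta}+2\breve{U}'_\lambda(\delta,t).
\]

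The parameters $\alpha\in(0,1)$ and $u\in(0,1/q)$ split the ``budget'' $(1-r)/q$ on the left: for $\delta\geq\bar{\delta}'_\lambda(t)$ the second term of $\bar{\delta}'_\lambda$ makes $2a^*(r,\lambda)/\delta\leq\alpha u(1-r)$, and the third term, on letting $\nu\to0$, makes $(1+r)R(g)/\delta\leq(1-\alpha)u(1-r)$; hence $2\breve{U}'_\lambda(\delta,t)\geq(1-r)(1/q-u)=(1-r)(1-qu)/q$, i.e. $\breve{U}'_\lambda(\delta,t)\geq(1-r)(1-qu)/(2q)$. As $\breve{U}'_\lambda(\cdot,t)$ is non-increasing, the definition of the $\dagger$-transform then forces $\delta<\delta'_\lambda(t)$. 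Contraposing: whenever $\delta'_\lambda(t)\leq\delta\leq\delta_j$, the event $E'_{\lambda,j}(t)$ precludes $R(\hat{g}_n^\lambda)\in[\delta_{j+1},\delta_j]$. Intersecting over the $j$ with $\delta_j\geq\delta$ shows that on $\bigcap_{\delta_j\geq\delta}E'_{\lambda,j}(t)$ and for $\delta\geq\bar{\delta}'_\lambda(t)$ one has $R(\hat{g}_n^\lambda)\leq\delta$, and a union bound over these (at most $\log_q(1/\delta)$ many) indices gives the stated probability estimate.

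The main obstacle is exactly this three-way bookkeeping — dividing the available slack between the deconvolution bias $a^*(r,\lambda)$, the oracle risk $\inf_h R(h)$, and the stochastic fluctuation $U'_\lambda$ — combined with keeping the comparison point $g$ inside $\mathcal{G}'(\delta_j)$. Unlike the exact case, where $R(g)-\inf_h R(h)$ may be taken arbitrarily small, here $R(g)$ cannot drop below $\inf_h R(h)$, which is precisely why the term $\frac{1+r}{(1-r)(1-\alpha)u}\inf_h R(h)$ must sit inside $\bar{\delta}'_\lambda(t)$ and, propagated into the proof of Theorem \ref{mainpred}, why that oracle inequality is necessarily non-exact. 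All remaining computations are a verbatim copy of the localization machinery of \cite{kolt} already deployed in Lemma \ref{lemmaest}.
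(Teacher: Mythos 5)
Your proposal is correct and follows essentially the same route as the paper: the same iterative localization over $\delta_j=q^{-j}$ with Bousquet's inequality applied to the loss class $\mathcal{G}'(\delta_j)$, the same decomposition combining (\ref{biassumptionpred}) for both $\hat g_n^\lambda$ and the comparison point with the empirical-minimizer property, and the same split of the slack $(1-r)/q$ between $a^*(r,\lambda)$, $\inf_{g\in\mathcal{G}}R(g)$ and $\breve U'_\lambda$ before invoking the $\dagger$-transform and a union bound. The only (harmless) deviation is that you use a near-minimizer $R(g)\leq\inf_h R(h)+\nu$ and let $\nu\to 0$, where the paper takes $g=g^*\in\arg\min_{\mathcal{G}}R$ directly.
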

\begin{proof}
The proof follows the proof of Lemma \ref{lemmaest} applied to the non-exact case.
Given $q>1$, we introduce a sequence of positive numbers:
\beqnn
\delta_j=q^{-j},\,\forall j\geq 1.
\eeqnn
Given $n,j\geq 1$, $t>0$ and $\lambda\in\R^d_+$, consider the event:
\beqnn
E'_{\lambda,j}(t)=\left\{Z'_\lambda(\delta_j)\leq U'_{\lambda}(\delta_j,t)\right\}.
\eeqnn
Then, we have that, using Theorem \ref{concentration}, $\P(E'_{\lambda,j}(t)^C)\leq e^{-t}$.\\
We restrict ourselves to the event $E'_{\lambda,j}(t)$. \\
Using assumption (\ref{biassumptionpred}), we have, for any $g\in\mathcal{G}$ and any $r\in (0,1)$:
\beqnn
R(\hat{g})\leq\frac{1}{1-r} \left((R^\lambda-R_n^\lambda)(\hat g)+a^*(r,\lambda)+R_n^\lambda(g)\right),
\eeqnn
where we use the definition of $\hat g=\hat{g}_n^\lambda$.
Moreover, note that, using again assumption (\ref{biassumptionpred}):
\beqnn
R_n^\lambda(g)&= & (R_n^\lambda-R^\lambda)(g)+(R^\lambda-R)(g)+R(g)\\
&\leq & (R_n^\lambda-R^\lambda)(g)+a^*(r,\lambda)+(1+r)R(g)
\eeqnn
Then, we have, for $g=g^*\in\arg\min_\mathcal{G} R(g)$:
\beqnn
R(\hat{g})&\leq &\frac{1}{1-r}\left((R_n^\lambda-R^\lambda)(g^*-\hat{g})+2a^*(r,\lambda)+(1+r)\inf_{g\in\mathcal{G}}R(g)\right).
\eeqnn
We hence have on the event $E'_{\lambda,j}(t)$:
\beqnn
\delta_{j+1}\leq R(\hat{g})\leq \delta_j\Rightarrow \delta_{j+1}\leq \frac{1}{1-r}\left(2U'_\lambda(\delta_j,t)+2a^*(r,\lambda)+(1+r)\inf_{g\in\mathcal{G}}R(g)\right),
\eeqnn
since in this case $R(g^*)\leq \delta_j$.
On the event $E'_{\lambda,j}(t)$, it follows that $\forall \delta\leq\delta_j$:
\beqnn
\delta_{j+1}\leq R(\hat{g})\leq \delta_j\Rightarrow\delta_{j+1}&\leq& \frac{1}{1-r}\left(2\delta_jV'_{\lambda}(\delta,t)+2a^*(r,\lambda)+(1+r)\inf_{g\in\mathcal{G}}R(g)\right),
\eeqnn
where $V'_{\lambda}(\delta,t)=\breve{U}'_\lambda(\cdot,t)$ is defined as above.
We obtain, for any $u\in (0,1/q)$:
\beqn
\label{keyineq}
\frac{2}{1-r}V'_{\lambda}(\delta,t)\geq \frac{1}{q}-\frac{q^j}{1-r}\big(2a(\lambda)+(1+r)\inf_{g\in\mathcal{G}}R(g)\big)> \frac{1}{q}-u,
\eeqn
provided that for any $\alpha\in (0,1)$, since $\delta\leq\delta_j$:
$$
a^*(r,\lambda)\leq \alpha\frac{ u (1-r)\delta}{2}\mbox{ and }\inf_{g\in\mathcal{G}}R(g)\leq (1-\alpha)\frac{u(1-r)}{1+r}\delta.
$$
From (\ref{keyineq}), on the event $E_{\lambda,j}(t)$, for any $\frac{1+r}{u(1-\alpha)(1-r)}\inf_{g\in\mathcal{G}}R(g)\vee \frac{2}{(1-r)\alpha u} a^*(r,\lambda)\leq\delta\leq \delta_j$:
\beqnn
\delta_{j+1}\leq R(\hat{g})\leq \delta_j\Rightarrow\delta\leq \delta'_\lambda(t):=[{U}'_\lambda(\cdot,t)]^{\dagger}\left(\frac{1-r}{2q}-\frac{(1-r)u}{2}\right),
\eeqnn
or equivalently, by definition of $\bar{\delta}'_\lambda(t)$:
\beqnn
\bar{\delta}'_\lambda(t) \leq \delta\leq \delta_j\Rightarrow \hat{g}\notin\GG'(\delta_{j+1},\delta_{j}),
\eeqnn
where here $\GG'(c,C)=\{g\in\mathcal{G}:c\leq R(g)\leq C\}$. We eventually obtain:
\beqnn
\bigcap_{\delta_j\geq \delta}E_{\lambda,j}(t)\mbox{ and }\delta\geq \bar{\delta}'_\lambda(t) \Rightarrow R(\hat{g})\leq \delta.
\eeqnn
This formulation allows us to write by union's bound, exactly as in the proof of Lemma \ref{lemmaest}:
\beqn
\label{endineq}
\mathbb{P}(R(\hat{g})\geq \delta)\leq \sum_{\delta_j\geq \delta} \P(E_{\lambda,j}(t)^C)\leq \log_q\left(\frac{1}{\delta}\right)e^{-t},
\eeqn
where $\delta\geq \bar{\delta}'_\lambda(t)$.

\end{proof}

\subsection{Proof of Theorem \ref{mainest} and \ref{mainpred}}
\subsubsection{Proof of Theorem \ref{mainest}}
The proof of Theorem \ref{mainest} is divided into two steps. Using Lemma \ref{lemmaest}, we obtain an exact oracle inequality when $|\mathcal{G}(0)|=1$. For the general case, we will introduce a more sophisticated localization explain in \cite[Section 4]{kolt}. Moreover, we begin the proof in dimension $d=1$ for simplicity. A slightly different algebra is precised at the end of the proof to lead to the general case.\\
\textbf{Case 1: $|\mathcal{G}(0)|=1$.\\}
When $|\mathcal{G}(0)|=1$, it is important to note that \textbf{MA($\kappa$)} holds with a minimizer $g^*\in\mathcal{G}$ which does not depend on $g$. Then, we can write, for any $g,g'\in\mathcal{G}(\delta)$:
$$
\no \ell(g)-l(g')\no_{L_2}\leq \no \ell(g)-l(g^*)\no_{L_2}+\no \ell(g')-l(g^*)\no_{L_2}\leq 2\sqrt{\kappa_0}\delta^{1/2\kappa}.
$$
Gathering with the entropy condition (\ref{complexity}), we obtain:
\beqnn
\E\sup_{g,g'\in\GG(\delta)}\left|(R_n^\lambda-R^\lambda)(g-g')\right|&\leq &\E\sup_{\no \ell(g)-\ell(g')\no_{L_2}\leq 2\sqrt{\kappa_0}\delta^{1/2\kappa}}\left|(R_n^\lambda-R^\lambda)(g-g')\right|\\
&\leq& C\frac{\lambda^{-\beta}}{\sqrt{n}}\delta^{\frac{1-\rho}{2\kappa}},
\eeqnn
where we use in last line Lemma 1 in \citet{loustau12}.
Then, using the notations of Lemma \ref{lemmaest}:
\beqnn
U_\lambda(\delta,t)&=&K\left[\E Z_\lambda(\delta)+\sqrt{\frac{t}{n}}\sigma_\lambda(\delta)+\sqrt{\frac{t}{n}\left(1+2b_\lambda(\delta)\right)\E Z_\lambda(\delta)}+\frac{t}{3n}\right]\\
&\leq & K\left[\frac{\lambda^{-\beta}}{\sqrt{n}}\delta^{\frac{1-\rho}{2\kappa}}+\sqrt{\frac{t}{n}}\sigma_\lambda(\delta)+\sqrt{\frac{t}{n}\left(1+2b_\lambda(\delta)\right)\frac{\lambda^{-\beta}}{\sqrt{n}}\delta^{\frac{1-\rho}{2\kappa}}}+\frac{t}{3n}\right].
\eeqnn
It remains to control the $L^2(\tilde{P})$-diameter $\sigma_\lambda(\delta)$ and the term $b_\lambda(\delta)$ thanks to Lemma \ref{lip}. Using again assumption \textbf{MA($\kappa$)}, and the unicity of the minimizer $g^*$, gathering with the first assertion of Lemma \ref{lip}, we can write:
\beqnn
\sigma_\lambda(\delta)=\sup_{g,g'\in\mathcal{G}(\delta)}\sqrt{\E_{\tilde P}(l_\lambda(g,Z)-l_\lambda(g',Z))^2}\leq C\lambda^{-\beta}\sqrt{\kappa_0}\delta^{\frac{1}{2\kappa}}.
\eeqnn
Now, by the second assertion of Lemma \ref{lip}:
$$
b_\lambda(\delta)=\sup_{g\in\mathcal{G}(\delta)}\no l_\lambda(g,\cdot)\no_{\infty}\leq C\lambda^{-\beta-1/2}.
$$
It follows that:
\beqn
\label{ineq}
U_{\lambda}(\delta,t)\leq K\left[\frac{\lambda^{-\beta}}{\sqrt{n}}\delta^{\frac{1-\rho}{2\kappa}}+\sqrt{t}\frac{\lambda^{-\beta}}{\sqrt{n}}\delta^{\frac{1}{2\kappa}}+\sqrt{\frac{t}{n}\left(1+\lambda^{-\beta-1/2}\right)\frac{\lambda^{-\beta}}{\sqrt{n}}\delta^{\frac{1-\rho}{2\kappa}}}+\frac{t}{3n}\right].
\eeqn
We hence have the following assertion:
$$
t\leq \delta^{-\frac{\rho}{\kappa}}\wedge \sqrt{n}\lambda^{-\beta}\delta^{\frac{1-\rho}{2\kappa}}\Rightarrow U'_{\lambda}(\delta,t)\leq  K \frac{\lambda^{-\beta}}{\sqrt{n}}\delta^{\frac{1-\rho}{2\kappa}}.
$$
From an easy calculation, we hence get in this case:
\beqnn
\delta_{\lambda}(t)\leq K\left(\frac{\lambda^{-\beta}}{\sqrt{n}}\right)^{\frac{2\kappa}{2\kappa+\rho-1}},
\eeqnn
where $K>0$ is a generic constant.
We are now on time to apply Lemma \ref{lemmaest} with:
$$
\delta=K\left(\frac{\lambda^{-\beta}}{\sqrt{n}}\right)^{\frac{2\kappa}{2\kappa+\rho-1}}\mbox{ and }t'=t+\log\log_q n.
$$
In this case, note that for any $t>0$ independent on $n$, the choice of $\lambda$ in Theorem \ref{mainest} warrants that, for any $n\geq n_0(t)$:
$$
t+\log\log_q n\leq \delta^{-\frac{\rho}{\kappa}}\wedge \sqrt{n}\lambda^{-\beta}\delta^{\frac{1-\rho}{2\kappa}}.
$$
Moreover, using Lemma \ref{bias}, we have in dimension $d=1$:
\beqnn
\forall g\in\mathcal{G},\,\left|(R-R^\lambda)(g-g^*)\right|\leq C\lambda^{2s}+\frac{1}{2}(R(g)-R(g^{*})).
\eeqnn
As a result condition (\ref{biassumption}) of Lemma \ref{lemmaest} is satisfied with $r=1/2$ and $a(\lambda)=\lambda^{2s}$. We can also check that for $n$ great enough, the choice of $\lambda$ in Theorem \ref{mainest} guarantees:
$$
\lambda^{2s}\leq K\left(\frac{\lambda^{-\beta}}{\sqrt{n}}\right)^{\frac{2\kappa}{2\kappa+\rho-1}}.
$$
Finally, we get the result since:
$$
\log_q\frac{1}{\delta}e^{-t'}\leq\left(\frac{2\kappa}{2\kappa+\rho-1}\right)\log\left(\frac{\sqrt{n}}{\lambda^{-\beta}}\right)\frac{e^{-t}}{\log_q(n)} \leq e^{-t}.
$$
For the $d$-dimensional case, we have the same algebra by replacing $\lambda^{-\beta}$ by $\Pi_{j=1}^d\lambda_j^{-\beta_j}$ in the previous calculus and $\lambda^{2s}$ by $\sum_{j=1}^d\lambda_j^{2s_j}$ thanks to Lemma \ref{bias}. The choice of $\lambda_j$, for $j=1,\ldots, d$ in Theorem \ref{mainest} allows to conclude.
\\
\textbf{Case 2: $|\mathcal{G}(0)|\geq 2$.\\}
When the infimum is not unique, the diameter $\sigma^2_\lambda(\delta)$ does not necessary tend to zero when $\delta\to 0$. We hence introduce the more sophisticated geometric parameter:
\beqnn
r(\sigma,\delta)=\sup_{g\in\mathcal{G}(\delta)}\inf_{g'\in\mathcal{G}(\sigma)}\sqrt{\E_{\tilde{P}}(\ell_\lambda(g,Z)-\ell_\lambda(g',Z))^2},\mbox{ for }0<\sigma\leq \delta.
\eeqnn
It is clear that $r(\sigma,\delta)\leq \sqrt{\sigma^2_\lambda(\delta)}$ and for $\delta\to 0$, we have $r(\sigma,\delta)\to 0$. The idea of the proof is to use a modified version of Lemma \ref{lemmaest} following \cite[Theorem 4]{kolt}. More precisely, we have to apply the concentration inequality of Theorem \ref{concentration} to the random variable:
$$
W_\lambda(\delta)=\sup_{g\in\mathcal{G}(\sigma)}\sup_{g'\in\mathcal{G}(\delta):\sqrt{\E_{\tilde P}(\ell_\lambda(g,Z)-\ell_\lambda(g',Z))^2}\leq r(\sigma,\delta)+\epsilon}\left|(R_n^\lambda-R^\lambda)(g-g')\right|.
$$
This localization guarantees the upper bounds of Theorem \ref{mainest} when $|\mathcal{G}(0)|\geq 2$. However, to this end, we have to check (for $d=1$ for simplicity):
\beqn
\label{newmodulus1}
\lim_{\epsilon\to 0}\E\sup_{g\in\mathcal{G}(\sigma)}\sup_{g'\in\mathcal{G}(\delta):\sqrt{\E_{\tilde P}(\ell_\lambda(g,Z)-\ell_\lambda(g',Z))^2}\leq r(\sigma,\delta)+\epsilon}\left|(R_n^\lambda-R^\lambda)(g-g')\right|\leq C \frac{\lambda^{-\beta}}{\sqrt{n}}\delta^{1/2\kappa},
\eeqn
and for $0<\sigma\leq\delta$:
\beqn
\label{newdiam1}
r(\sigma,\delta)\leq C\lambda^{-\beta}\delta^{1/2\kappa}.
\eeqn
Using \textbf{MA($\kappa$)} and Lemma 1 in \cite{loustau12}, it is clear that (\ref{newmodulus1}) holds since:
\beqnn
&&\E\sup_{g\in\mathcal{G}(\sigma)}\sup_{g'\in\mathcal{G}(\delta):\sqrt{\E_{\tilde P}(\ell_\lambda(g,Z)-\ell_\lambda(g',Z))^2}\leq r(\sigma,\delta)+\epsilon}\left|(R_n^\lambda-R^\lambda)(g-g')\right|\\
&&\leq \E\sup_{g\in\mathcal{G}(\sigma),g^*\in\mathcal{G}(0)}\left|(R_n^\lambda-R^\lambda)(g-g^*)\right|+\E\sup_{g'\in\mathcal{G}(\delta)}\left|(R_n^\lambda-R^\lambda)(g'-g^*(g'))\right|\\
&&\leq 2\mathbb{E} \sup_{(g,g^*)\in\mathcal{G}(\delta)\times\mathcal{G}(0)} \left|(R_n^\lambda-R^\lambda)(g^* - g)\right| \\
&&\leq  C\frac{\lambda^{-\beta}}{\sqrt{n}}\delta^{1/2\kappa}.
\eeqnn
To check (\ref{newdiam1}), note that with \textbf{MA($\kappa$)} and the first assertion of Lemma \ref{lip}, we have $\forall g\in\mathcal{G}(\delta),g'\in\mathcal{G}(\sigma)$:
\beqnn
\sqrt{\E_{\tilde{P}}(\ell_\lambda(g,Z)-\ell_\lambda(g',Z))^2}&\leq& C\lambda^{-\beta}\no\ell(g)-\ell(g')\no_{L_2}\\
&\leq&C\lambda^{-\beta}\delta^{1/2\kappa}+C\lambda^{-\beta}\no\ell (g^*(g))-\ell (g^*(g'))\no_{L_2},
\eeqnn
for $0<\sigma\leq\delta$. Taking the infimum with respect to $g'\in\mathcal{G}(\sigma)$, we get: 
$$\no\ell (g^*(g))-\ell (g^*(g'))\no_{L_2}=0.$$
\subsubsection{Proof of Theorem \ref{mainpred}}
The main ingredient of the proof is Lemma \ref{lemmapred}. We want to find a convenient bound for the term (see the notations of Lemma \ref{lemmapred}):
\beqnn
U'_{\lambda}(\delta,t)=K\left[Z'_\lambda(\delta)+\sqrt{\frac{t}{n}}\sigma'_\lambda(\delta)+\sqrt{\frac{t}{n}\left(1+b'_\lambda(\delta)\right)\E Z'_\lambda(\delta)}+\frac{t}{3n}\right].
\eeqnn
First note that since $\ell(g,\cdot)$ is bounded, we have the crude bound $\E_{P}\ell(g,X)^2\leq MR(g)$, where $M=\no \ell(g,\cdot)\no_\infty$. Hence, we have, using the entropy condition:
\beqnn
\E Z'_\lambda(\delta)&=&\E\sup_{g\in\GG'(\delta)}\left|(R_n^\lambda-R^\lambda)(g)\right|\\
&\leq &\E\sup_{\no \ell(g)\no_{L_2(P)}\leq \sqrt{M}\delta^{1/2}}\left|(R_n^\lambda-R^\lambda)(g)\right|\\
&\leq& C\frac{\lambda^{-\beta}}{\sqrt{n}}\delta^{\frac{1-\rho}{2}},
\eeqnn
where we use in last line Lemma 1 in \cite{loustau12}.\\ We obtain:
\beqnn
U'_{\lambda}(\delta,t)&\leq & K\left[\frac{\lambda^{-\beta}}{\sqrt{n}}\delta^{\frac{1-\rho}{2}}+\sqrt{\frac{t}{n}}\sigma'_\lambda(\delta)+\sqrt{\frac{t}{n}\left(1+b'_\lambda(\delta)\right)\frac{\lambda^{-\beta}}{\sqrt{n}}\delta^{\frac{1-\rho}{2}}}+\frac{t}{3n}\right].
\eeqnn
Now, from Lemma \ref{lipbest}, we have the following control of $\sigma'_\lambda(\delta)$:
\beqnn
\sigma'_\lambda(\delta)=\sup_{g\in\mathcal{G}'(\delta)}\sqrt{E_{\tilde{P}}\ell_\lambda(g)^2}\leq C\lambda^{-\beta}\sqrt{\E\ell(g,X)^2}\leq C\lambda^{-\beta}\sqrt{\delta},
\eeqnn
where $C>0$ is a generic constant and where we use in the last inequality the boundedness assumption of $\ell(g,\cdot)$.
Now by the second assertion of Lemma \ref{lip}:
$$
b'_\lambda(\delta)=\sup_{g\in\mathcal{G}(\delta)}\no l_\lambda(g,\cdot)\no_{\infty}\leq C\lambda^{-\beta-1/2}.
$$
It follows that:
\beqn
U'_{\lambda}(\delta,t)\leq K\left[\frac{\lambda^{-\beta}}{\sqrt{n}}\delta^{\frac{1-\rho}{2}}+\sqrt{\frac{t}{n}}\lambda^{-\beta}\delta^{\frac{1}{2}}+\sqrt{\frac{t}{n}}\frac{\lambda^{-\beta}}{\sqrt{n}}+\sqrt{\frac{t}{n}\left(1+\lambda^{-\beta-1/2}\right)\frac{\lambda^{-\beta}}{\sqrt{n}}\delta^{\frac{1-\rho}{2}}}+\frac{t}{3n}\right].
\eeqn
We hence have in this case the following assertion:
$$
t\leq \delta^{-2\rho}\wedge n\delta^{-\rho}\wedge \sqrt{n\lambda}\delta^{\frac {1-\rho}{2}}\Rightarrow U'_{\lambda}(\delta,t)\leq  K \frac{\lambda^{-\beta}}{\sqrt{n}}\delta^{\frac{1-\rho}{2}}.
$$
From an easy calculation, we hence get with the notations of Lemma \ref{lemmapred}:
\beqn
\label{condition1}
\delta'_{\lambda}(t)\leq K\left(\frac{\lambda^{-\beta}}{\sqrt{n}}\right)^{\frac{2}{1+\rho}},
\eeqn
where $K>0$ is a generic constant. Let us consider, for any $\epsilon>0$:
$$
\delta=\frac{K\vee 2C}{\alpha_\epsilon u_\epsilon(1-r_\epsilon)r_\epsilon}\left(\frac{\lambda^{-\beta}}{\sqrt{n}}\right)^{\frac{2}{1+\rho}}+(1+\epsilon)\inf_{g\in\mathcal{G}}R(g),
$$
where $(r_\epsilon,\alpha_\epsilon,u_\epsilon)\in (0,1)^2\times (0,1/q)$ are chosen later on as a function of $\epsilon>0$.
Using Lemma \ref{biasbest}, we have in dimension $d=1$, for any $r\in (0,1)$:
\beqnn
\forall g\in\mathcal{G},\,\left|(R-R^\lambda)(g)\right|\leq \frac{C}{r}\lambda^{2s}+rR(g).
\eeqnn
As a result, condition (\ref{biassumptionpred}) of Lemma \ref{lemmapred} is satisfied with $a^*(r,\lambda)=C\lambda^{2s}/r$. The choice of $\lambda$ in Theorem \ref{mainpred} warrants that:
\beqn
\label{condition2}
\lambda^{2s}\leq \left(\frac{\lambda^{-\beta}}{\sqrt{n}}\right)^{\frac{2}{1+\rho}}.
\eeqn
Moreover, for any $\epsilon>0$, we can find a triplet $(r_\epsilon,\alpha_\epsilon,u_\epsilon)\in (0,1)^2\times (0,1/q)$ such that:
\beqn
\label{epsiloncond}
1+\epsilon\geq \frac{1+r_\epsilon}{(1-r_\epsilon)u_\epsilon(1-\alpha_\epsilon)}.
\eeqn
Inequalities (\ref{condition1}), (\ref{condition2}) and  (\ref{epsiloncond}) give us:
$$
\delta\geq \max\left(\delta'_\lambda(t),\frac{1+r_\epsilon}{(1-r_\epsilon)u_\epsilon(1-\alpha_\epsilon)}\inf_{g\in\mathcal{G}}R(g),\frac{2}{(1-r_\epsilon)\alpha_\epsilon u_\epsilon}a^*(r_\epsilon,\lambda)\right).
$$
Finally, we can apply Lemma \ref{lemmapred} with the triplet $(r_\epsilon,\alpha_\epsilon,u_\epsilon)$, $t'=t+\log\log_q n$ and get the result since:
$$
\log_q\frac{1}{\delta}e^{-t'}\leq \frac{2}{1+\rho}\log \left(\frac{\sqrt{n}}{\lambda^{-\beta}}\right)\frac{e^{-t}}{\log_q n}\leq e^{-t}.
$$
\subsection{Proof of Theorem \ref{thm:application}}
The proof of Theorem \ref{thm:application} uses a slightly different version of Theorem \ref{mainest}. First of all, an inspection of the proof of Theorem \ref{mainest} shows that condition (\ref{complexity}) in Theorem \ref{mainest} can be replaced by the following control of the local complexity of the noisy empirical process:
\beqn
\label{localcomp}
\E\sup_{g,g'\in\mathcal{G}(\delta)}\left|(R_n^\lambda-R^\lambda)(g-g')\right|\leq C\frac{\lambda^{-\beta}}{\sqrt{n}}\delta^{\frac{1-\rho}{2\kappa}}.
\eeqn
Hence, using Lemma \ref{complexitync} in the Appendix, gathering with condition \textbf{(PRC)}, we can have (\ref{localcomp}) with $\rho=0$.\\
However, the case $\rho=0$ is not treated in Theorem \ref{mainest} where $\rho\in (0,1)$. From (\ref{localcomp}), and using the notations of Lemma \ref{lemmaest}, (\ref{ineq}) in the proof of Theorem \ref{mainest} becomes:
\beqnn
U_{\lambda}(\delta,t)\leq K\left[\frac{\lambda^{-\beta}}{\sqrt{n}}\delta^{\frac{1}{2}}+\sqrt{t}\frac{\lambda^{-\beta}}{\sqrt{n}}\delta^{\frac{1}{2}}+\sqrt{\frac{t}{n}\left(1+\lambda^{-\beta-1/2}\right)\frac{\lambda^{-\beta}}{\sqrt{n}}\delta^{\frac{1}{2}}}+\frac{t}{3n}\right].
\eeqnn
We hence have the following assertion:
$$
t\leq  \sqrt{n}\lambda^{-\beta}\delta^{\frac{1}{2}}\Rightarrow U_{\lambda}(\delta,t)\leq  K\left(1+\sqrt{t}\right) \frac{\lambda^{-\beta}}{\sqrt{n}}\delta^{\frac{1}{2}}.
$$
Using the same algebra as above, we can use Lemma \ref{lemmaest} with:
$$
\delta=K\left(1+\sqrt{t'}\right)\left(\frac{\lambda^{-\beta}}{\sqrt{n}}\right)^{\frac{2}{1+\rho}}\mbox{ and }t'=t+\log\log_q n.
$$
In this case, note that the choice of $t'=t+\log\log_q n$ gives rise to the following asymptotic:
$$\delta\approx\sqrt{\log\log n}\frac{\lambda^{-\beta}}{\sqrt{n}}\delta^{\frac{1}{2}},$$
and leads to an extra $\sqrt{\log\log n}$ term in the rates of convergence.%
\section{Appendix}
\label{appendix}
\subsection{Technical lemmas for the exact case}
\begin{lemma}
\label{lip}
Suppose \textbf{(NA)} holds, and $\mathcal{K}$ satisfies assumption \textbf{(K1)}. Suppose $\no f*\eta\no_\infty\leq \tilde c_\infty$ and $\sup_{g\in\mathcal{G}}\no \ell(g,\cdot)\no_{L_2(K)}<\infty$. Then, the two following assertions hold:
\begin{description}
\item[(i)] $\ell(g)\mapsto \ell_\lambda(g)$ is Lipschitz with respect to $\lambda$:
\beqnn
\forall g,g'\in\GG,\,\Arrowvert \ell_\lambda(g,\cdot)-\ell_\lambda(g',\cdot)\Arrowvert_{L_2(\tilde{P})}\leq  C_1\Pi_{i=1}^d\lambda_i^{-\beta_i}\Arrowvert \ell(g,\cdot)-\ell(g',\cdot)\Arrowvert_{L_2},
\eeqnn
where $C>0$ is a generic constant which depends on $\tilde c_\infty$ and constants in \textbf{(K1)}.
\item[(ii)] $\{\ell_\lambda(g),g\in\GG\}$ is uniformly bounded:
$$
\sup_{g\in\GG}\| \ell_\lambda(g,\cdot)\|_\infty\leq  C_2\Pi_{i=1}^d\lambda_i^{-(\beta_i+1/2)},
$$
where $C_2>0$ is a generic constant which depends on constants in \textbf{(K1)}.
\end{description}
\end{lemma}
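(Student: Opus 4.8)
The plan is to derive both assertions from uniform properties of the deconvolution smoothing operator $h\mapsto h_\lambda$, where for $h\in L_2(K)$ one sets $h_\lambda(z)=\int_K h(x)\,K_\lambda(z-x)\,dx=(h*K_\lambda)(z)$ with $K_\lambda(u)=\frac{1}{\prod_i\lambda_i}\,\mathcal{K}_\eta(u/\lambda)$; indeed $\ell_\lambda(g,\cdot)-\ell_\lambda(g',\cdot)=h_\lambda$ for $h=(\ell(g,\cdot)-\ell(g',\cdot))\mathbf{1}_K$, and $\ell_\lambda(g,\cdot)=h_\lambda$ for $h=\ell(g,\cdot)\mathbf{1}_K$. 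The first step is the Fourier identity (with the coordinatewise notation $\lambda t=(\lambda_1t_1,\dots,\lambda_dt_d)$) $\FF[K_\lambda](t)=\FF[\mathcal{K}_\eta](\lambda t)=\FF[\mathcal{K}](\lambda t)/\FF[\eta](t)$, which follows directly from the definition of $\mathcal{K}_\eta$ and a change of variables. The one estimate doing all the work is then
\[
\mbox{supp}\,\FF[K_\lambda]\subset B_\lambda:=\textstyle\bigotimes_{i=1}^d[-S_i/\lambda_i,\ S_i/\lambda_i],\qquad \sup_{t\in\R^d}|\FF[K_\lambda](t)|\le C K_1\prod_{i=1}^d\lambda_i^{-\beta_i},
\]
valid for $\lambda$ small enough: the support statement is just \textbf{(K1)} applied to $\FF[\mathcal{K}](\lambda\,\cdot)$, while the sup bound combines $|\FF[\mathcal{K}]|\le K_1$ with the fact that on $B_\lambda$ one has $1/|\FF[\eta](t)|\le C\prod_i(1+|t_i|)^{\beta_i}\le C'\prod_i\lambda_i^{-\beta_i}$, obtained from \textbf{(NA)} in the large-frequency regime and from continuity together with the global non-vanishing of $\FF[\eta]$ on the complementary compact part.

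For (i): since $\tilde P$ has density $f*\eta$, I would first pass to Lebesgue measure, $\|h_\lambda\|_{L_2(\tilde P)}^2=\int h_\lambda^2\,(f*\eta)\le \tilde c_\infty\,\|h_\lambda\|_{L_2(\R^d)}^2$, and then apply Plancherel together with the key estimate, $\|h_\lambda\|_2=(2\pi)^{-d/2}\|\FF[h]\,\FF[K_\lambda]\|_2\le \|\FF[K_\lambda]\|_\infty\,\|h\|_2\le CK_1\prod_i\lambda_i^{-\beta_i}\,\|h\|_{L_2}$. Combining the two bounds and taking square roots gives the Lipschitz estimate (i), with $C_1$ depending only on $\tilde c_\infty$ and the constants in \textbf{(K1)}.

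For (ii): bound $\ell_\lambda(g,z)=\int_K\ell(g,x)\,K_\lambda(z-x)\,dx$ by Cauchy--Schwarz, so that $|\ell_\lambda(g,z)|\le \|\ell(g,\cdot)\|_{L_2(K)}\,\|K_\lambda\|_{L_2(\R^d)}$ uniformly in $z$. It then remains to estimate $\|K_\lambda\|_2^2=(2\pi)^{-d}\int_{B_\lambda}|\FF[K_\lambda]|^2\le C K_1^2\prod_i\lambda_i^{-2\beta_i}\cdot|B_\lambda|$, and since $|B_\lambda|=2^d\prod_i S_i\,\lambda_i^{-1}$ this gives $\|K_\lambda\|_2\le C\prod_i\lambda_i^{-(\beta_i+1/2)}$; invoking $\sup_{g\in\mathcal{G}}\|\ell(g,\cdot)\|_{L_2(K)}<\infty$ yields (ii), with $C_2$ depending only on the \textbf{(K1)} constants and that supremum.

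I expect the one delicate point to be the uniform control of $1/|\FF[\eta]|$ over the whole box $B_\lambda$ used above. Assumption \textbf{(NA)} is only a per-coordinate asymptotic statement as each $|t_i|\to\infty$, so near the origin and near the coordinate hyperplanes $\{t_i=0\}$ the bound must be argued separately, using the continuity and global non-vanishing of $\FF[\eta]$ on the relevant compact pieces; one should also check that the estimate is genuinely $d$-dimensional, e.g. by partitioning $B_\lambda$ according to which coordinates are ``large'' and using compactness in the others (the factors $\lambda_i^{-\beta_i}\ge 1$ make the remaining coordinates harmless). Everything after that estimate is just Plancherel, Young/Cauchy--Schwarz and the elementary volume computation for $B_\lambda$.
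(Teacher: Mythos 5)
Your proposal is correct and follows essentially the same route as the paper's proof: Plancherel plus the bound on $\mathcal{F}\bigl[\tfrac{1}{\lambda}\mathcal{K}_\eta(\cdot/\lambda)\bigr]$ obtained from \textbf{(K1)} and \textbf{(NA)} for assertion (i), and Cauchy--Schwarz plus the $L_2$-norm of the rescaled deconvolution kernel (support volume giving the extra $\prod_i\lambda_i^{-1/2}$) for assertion (ii). Your explicit treatment of the $d$-dimensional control of $1/|\mathcal{F}[\eta]|$ on the box is in fact more careful than the paper, which argues in dimension one and invokes a straightforward generalization.
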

\begin{proof}
Using Plancherel and the boundedness assumption over $f*\eta$, we have: 
\beqnn
\E_{\tilde P}(\ell_\lambda(g,Z)-\ell_\lambda(g',Z))^2
& = & \int \left[ \frac{1}{\lambda}\mathcal{K}_\eta(\frac{\cdot}{\lambda})*(\ind_K\times(\ell(g,\cdot)-\ell(g',\cdot))(z) \right]^2 f*\eta(z)dz \\
& \leq &C \int \frac{1}{\lambda^2}|\mathcal{F}[\mathcal{K}_\eta(\frac{\cdot}{\lambda})](t)|^2|\mathcal{F}[\ind_K\times (\ell(g,\cdot)-\ell(g',\cdot))](t)|^2 dt \\
& \leq& C\lambda^{-2\beta} \no \ell(g)-\ell(g')\no_{L_2}^2,
\eeqnn 
where we use in last line the following inequalities:
\beqnn
\frac{1}{\lambda^2} \left| \mathcal{F}[\mathcal{K}_\eta(./\lambda)](s) \right|^2 =  \left| \mathcal{F}[\mathcal{K}_\eta](s\lambda) \right|^2  \leq C\sup_{t\in \mathbb{R}} \left| \frac{\mathcal{F}[\mathcal{K}](t\lambda)}{\mathcal{F}[\eta](t)} \right|^2 \leq C\sup_{t\in [-\frac{L}{\lambda},\frac{L}{\lambda}]}  \left| \frac{1}{\mathcal{F}[\eta](t)} \right|^2 \leq C  \lambda^{-2\beta},
\eeqnn
provided that \textbf{(K1)} holds. 

By the same way, the second assertion holds since if $\ell(g,\cdot)\in L^2(K)$:
\begin{eqnarray*}
| \ell_\lambda(g,z)|
& \leq & \int_{K} \left| \frac{1}{\lambda}\mathcal{K}_\eta \left(\frac{z-x}{\lambda} \right)\ell(g,x)\right| dx \\
& \leq& C\sqrt{\int_{K} \left|\frac{1}{\lambda} \mathcal{K}_\eta \left(\frac{z-x}{\lambda} \right)\right|^2 dx} \\
& \leq &C \lambda^{-\beta-1/2}.
\end{eqnarray*}
A straightforward generalization leads to the $d$-dimensional case.
\end{proof}

\begin{lemma}
\label{bias}
Suppose $f$ belongs to the anisotropic H\"older spaces $\mathcal{H}(s,L)$ with $s=(s_1,\ldots, s_d)$. Let $\mathcal{K}$ a kernel satisfying assumption \textbf{K($m$)} with $m=\lfloor s \rfloor\in\N^d$. Suppose \textbf{MA($\kappa$)} holds with parameter $\kappa\geq 1$. Then, we have:
\beqnn
\forall g\in\mathcal{G},\,\left|(R-R^\lambda)(g-g^*(g))\right|\leq C\sum_{j=1}^d\lambda_j^{2\kappa s_j/(2\kappa-1)}+\frac{1}{2\kappa}(R(g)-\inf_{g\in\mathcal{G}}R(g)),
\eeqnn
where $C>O$ is a generic constant.
\end{lemma}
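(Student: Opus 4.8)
The plan is to peel $(R-R^\lambda)(g-g^*(g))$ into an integral of the excess loss against the deterministic kernel bias $f-f_\lambda$, bound it by Cauchy--Schwarz into a product of an $L_2$ modulus of the excess loss and an $L_2$ bias, control the two factors by \textbf{MA($\kappa$)} and by the anisotropic approximation property of $\mathcal K$ respectively, and finally split the product by Young's inequality so as to produce exactly the coefficient $\frac{1}{2\kappa}$ in front of $R(g)-\inf_{g\in\mathcal G}R(g)$.

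First I would write the quantity as an integral. Set $f_\lambda(x):=\E_P\frac1\lambda\mathcal K\big(\frac{X-x}{\lambda}\big)=\int_{\R^d}\mathcal K(v)f(x+\lambda v)\,dv$, the kernel-smoothed version of $f$ that defines $R^\lambda$ through $R^\lambda(h)=\int_K\ell(h,x)f_\lambda(x)\,dx$. Then $R(h)-R^\lambda(h)=\int_K\ell(h,x)(f(x)-f_\lambda(x))\,dx$, hence
\[
(R-R^\lambda)(g-g^*(g))=\int_K\big(\ell(g,x)-\ell(g^*(g),x)\big)\big(f(x)-f_\lambda(x)\big)\,dx ,
\]
and Cauchy--Schwarz gives $\big|(R-R^\lambda)(g-g^*(g))\big|\leq\no\ell(g,\cdot)-\ell(g^*(g),\cdot)\no_{L_2}\,\no f-f_\lambda\no_{L_2(K)}$.

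Next I would bound the two factors. For the first, \textbf{MA($\kappa$)} gives at once $\no\ell(g,\cdot)-\ell(g^*(g),\cdot)\no_{L_2}\leq\sqrt{\kappa_0}\,\big(R(g)-\inf_{g\in\mathcal G}R(g)\big)^{1/(2\kappa)}$. For the second, I would use the classical anisotropic kernel-bias estimate: since $f\in\mathcal H(s,L)$ and $\mathcal K=\prod_j\mathcal K_j$ is of order $m=\lfloor s\rfloor$ by \textbf{K($m$)}, one writes $f(x+\lambda v)-f(x)$ as a telescoping sum over the coordinates in which a single coordinate is changed at a time, Taylor-expands each summand in that coordinate up to order $\lfloor s_j\rfloor$, uses $\int\mathcal K_j(v_j)v_j^k\,dv_j=0$ for $1\le k\le\lfloor s_j\rfloor$ to annihilate the polynomial part (harmless even though the base-point derivatives depend on the already-shifted earlier coordinates, since that integral vanishes factorwise), and bounds the Taylor remainder via the $s_j$-Hölder control of $\partial_j^{\lfloor s_j\rfloor}f$ from the definition of $\mathcal H(s,L)$, which holds uniformly over the remaining coordinates. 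This yields $\sup_{x\in\R^d}|f(x)-f_\lambda(x)|\leq C\sum_{j=1}^d\lambda_j^{s_j}$, hence $\no f-f_\lambda\no_{L_2(K)}\leq C\sqrt{\mathrm{Leb}(K)}\sum_{j=1}^d\lambda_j^{s_j}$; this step is as in \citet{booktsybakov}.

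Finally I would combine these bounds and split the product by Young's inequality. With $D:=R(g)-\inf_{g\in\mathcal G}R(g)\geq0$ and $B:=\sum_{j=1}^d\lambda_j^{s_j}$, the previous two steps give $\big|(R-R^\lambda)(g-g^*(g))\big|\leq A\,D^{1/(2\kappa)}B$ for a constant $A>0$. Young's inequality with conjugate exponents $p=2\kappa$ and $p'=\frac{2\kappa}{2\kappa-1}$ (here $p'\in(1,2]$ since $\kappa\geq1$) gives $D^{1/(2\kappa)}(AB)\leq\frac{1}{2\kappa}D+\frac{1}{p'}(AB)^{p'}$, and convexity of $t\mapsto t^{p'}$ gives $B^{p'}\leq d^{\,p'-1}\sum_{j=1}^d\lambda_j^{s_jp'}=d^{\,p'-1}\sum_{j=1}^d\lambda_j^{2\kappa s_j/(2\kappa-1)}$; collecting all constants into one $C$ yields the claim. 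The one genuinely delicate point is the anisotropic bias estimate: in each telescoping term the other coordinates have already been shifted, so the Hölder control of the $\lfloor s_j\rfloor$-th derivative must be used in its uniform-in-$x$ form to keep the remainder of size $\lambda_j^{s_j}$; the rest is routine bookkeeping.
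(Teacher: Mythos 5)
Your proposal is correct and follows essentially the same route as the paper: write $(R-R^\lambda)(g-g^*(g))$ as the integral of the excess loss against the kernel-smoothing bias, bound that bias uniformly by $C\sum_j\lambda_j^{s_j}$ using \textbf{K($m$)} and the anisotropic H\"older regularity, apply Cauchy--Schwarz on the compact $K$ together with \textbf{MA($\kappa$)}, and finish with Young's inequality with exponents $2\kappa$ and $2\kappa/(2\kappa-1)$. The only difference is cosmetic: the paper cites Proposition 1 of \cite{comtelacour} for the sup-norm bias bound, whereas you re-derive it via the coordinatewise telescoping/Taylor argument, which is exactly the content of that proposition.
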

\begin{proof}
Note that we can write:
\beqnn
(R^\lambda-R)(g-g^*)
&=&\int_{K}(\ell(g,x)-\ell(g^*,x))\left(\E\hat{f}_\lambda(x)-f(x)\right)dx,
\eeqnn
where we omit the notation $g^*=g^*(g)$ for simplicity.
The first part of the proof uses Proposition 1 stated in \cite{comtelacour}.
\begin{proposition}[\cite{comtelacour}]
Let $B_0(\lambda)=\sup_{x_0\in\R^d}|f(x_0)-\E\hat{f}_\lambda(x_0)|$. Then, if $f$ belongs to the anisotropic H\"older space $\mathcal{H}(s,L)$, and $\mathcal{K}$ is a kernel of order $\lfloor s\rfloor$, we have:
$$
B_0(\lambda)\leq C \sum_{j=1}^d\lambda_j^{s_j},
$$
where $C>0$ denotes some generic constant.
\end{proposition}
The rest of the proof uses the margin assumption \textbf{MA($\kappa$)} as follows:
\beqnn
\left|(R^\lambda-R)(g-g^*)\right|
&\leq&C \sum_{j=1}^d\lambda_j^{s_j}\int_{K}|\ell(g,x)-\ell(g^*,x)|dx.\\
&\leq&C \sum_{j=1}^d\lambda_j^{s_j}\sqrt{\int_{K}|\ell(g,x)-\ell(g^*,x)|^2dx}\\
&\leq &C\sum_{j=1}^d\lambda_j^{s_j} \left(R(g)-R(g^*)\right)^{\frac{1}{2\kappa}}\\
&\leq & C\sum_{j=1}^d\lambda_j^{2\kappa s_j/(2\kappa-1)}+\frac{1}{2\kappa}(R(g)-\inf_{g\in\mathcal{G}}R(g)),
\eeqnn
where we use in last line Young's inequality:
$$
xy^r\leq ry+x^{1/1-r},\forall r<1,
$$
with $r=\frac{1}{2\kappa}$.
\end{proof}
\subsection{Technical lemmas for the non-exact case}

\begin{lemma}
\label{lipbest}
Suppose \textbf{(NA)} and \textbf{DA($c_0$)} holds, and $\mathcal{K}$ satisfies assumption \textbf{(K1)}. Suppose $\no f*\eta\no_\infty\leq \tilde c_\infty$ and $\sup_{g\in\mathcal{G}}\no \ell(g,\cdot)\no_{L_2(K)}<\infty$. Then, we have:
\beqnn
\forall g\in\GG,\,\sqrt{\E_{\tilde{P}}\ell_\lambda(g,Z)^2}\leq  C'_1\Pi_{i=1}^d\lambda_i^{-\beta_i}\sqrt{\E_P\ell(g,X)^2},
\eeqnn
where $C_1'>0$ is a generic constant which depends on $c_0$, $\tilde c_\infty$ and constants in \textbf{(K1)}.
\end{lemma}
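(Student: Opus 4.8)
The plan is to reproduce the Fourier-analytic estimate of Lemma~\ref{lip}(i), but applied to $\ell(g,\cdot)$ itself rather than to an excess loss $\ell(g,\cdot)-\ell(g',\cdot)$, and then to invoke the density assumption \textbf{DA($c_0$)} in a final step to turn the resulting flat Lebesgue $L_2$-norm over $K$ into the weighted norm $\sqrt{\E_P\ell(g,X)^2}$.

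First I would note that, as a function of $z$, the deconvolved loss is the convolution $\ell_\lambda(g,\cdot)=\left[\frac{1}{\lambda}\mathcal{K}_\eta\left(\frac{\cdot}{\lambda}\right)\right]*\left[\ind_K\times\ell(g,\cdot)\right]$ (well defined since $\ind_K\ell(g,\cdot)\in L_2$ by the assumption $\sup_{g}\no\ell(g,\cdot)\no_{L_2(K)}<\infty$), so that, writing $f*\eta$ for the density of $Z$,
$$
\E_{\tilde P}\ell_\lambda(g,Z)^2=\int\left[\frac{1}{\lambda}\mathcal{K}_\eta\left(\frac{\cdot}{\lambda}\right)*(\ind_K\times\ell(g,\cdot))(z)\right]^2 f*\eta(z)\,dz .
$$
Bounding $f*\eta$ by $\tilde c_\infty$ and applying Plancherel's identity, then the elementary bound $\frac{1}{\lambda^2}\left|\mathcal{F}[\mathcal{K}_\eta(\cdot/\lambda)](s)\right|^2\leq C\,\Pi_{i=1}^d\lambda_i^{-2\beta_i}$ — which follows from \textbf{(K1)} and \textbf{(NA)} exactly as in the proof of Lemma~\ref{lip} — one obtains
$$
\E_{\tilde P}\ell_\lambda(g,Z)^2\leq C\tilde c_\infty\,\Pi_{i=1}^d\lambda_i^{-2\beta_i}\int_K\ell(g,x)^2\,dx .
$$

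The only genuinely new step is to dominate $\int_K\ell(g,x)^2\,dx$ by $\E_P\ell(g,X)^2=\int_K\ell(g,x)^2 f(x)\,dx$ (recall that $R$, hence $\E_P$, is taken restricted to $K$ throughout the paper): by \textbf{DA($c_0$)} we have $f(x)\geq c_0$ on $K$, whence $\int_K\ell(g,x)^2\,dx\leq c_0^{-1}\int_K\ell(g,x)^2 f(x)\,dx$. Combining the two displays and taking square roots yields the claim with $C_1'=\sqrt{C\tilde c_\infty/c_0}$; the passage from $d=1$ to general $d$ is the same coordinate-wise product argument as in Lemma~\ref{lip}. There is no real obstacle here — the point worth emphasizing is precisely why \textbf{DA($c_0$)} cannot be dropped: without a uniform lower bound on $f$ over $K$ there is no way to control the flat integral $\int_K\ell(g,x)^2\,dx$ by the $P$-weighted one, and this control is exactly what is needed to relate the variance of $\ell_\lambda(g,Z)$ to that of $\ell(g,X)$ in the non-exact analysis, as discussed in the remark following Theorem~\ref{mainpred}.
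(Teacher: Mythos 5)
Your proof is correct and is essentially identical to the paper's own argument: bound $f*\eta$ by $\tilde c_\infty$, apply Plancherel together with the \textbf{(NA)}/\textbf{(K1)} bound on $\mathcal{F}[\mathcal{K}_\eta(\cdot/\lambda)]$ to get the factor $\Pi_{i=1}^d\lambda_i^{-2\beta_i}$ times $\int_K\ell(g,x)^2dx$, then use \textbf{DA($c_0$)} to dominate this flat integral by $c_0^{-1}\E_P\ell(g,X)^2$. No gaps; the remarks on why \textbf{DA($c_0$)} is indispensable match the paper's discussion.
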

\begin{proof}
Using Plancherel and the boundedness assumption over $f*\eta$, we have as above: 
\beqnn
\E_{\tilde P}\ell_\lambda(g,Z)^2 & = & \int \left[ \frac{1}{\lambda}\mathcal{K}_\eta(\frac{\cdot}{\lambda})*\ind_K\times\ell(g,\cdot)(z) \right]^2 f*\eta(z)dz \\
& \leq & C\lambda^{-2\beta}\int_K |\ell(g,z)|^2 dz\\
&\leq&C\frac{\lambda^{-2\beta}}{c_0}\int_K |\ell(g,z)|^2 f(z)dz\\
&\leq&C\lambda^{-2\beta}P\ell(g,X)^2,
\eeqnn
where we use in the third line assumption \textbf{DA($c_0$)}.
\end{proof}
\begin{lemma}
\label{biasbest}
Suppose $f$ belongs to the anisotropic H\"older spaces $\mathcal{H}(s,L)$ with $s=(s_1,\ldots, s_d)$. Let $\mathcal{K}$ a kernel satisfying assumption \textbf{K($m$)} with $m=\lfloor s \rfloor$. Then, we have, for any $r>0$:
\beqnn
\forall g\in\mathcal{G},\,\left|R(g)-R^\lambda(g)\right|\leq \frac{C}{r}\sum_{j=1}^d\lambda_j^{2s_j}+rR(g),
\eeqnn
where $C>O$ is a generic constant which does not depend on $r>0$.
\end{lemma}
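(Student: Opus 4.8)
The plan is to follow the proof of Lemma~\ref{bias}, but with the loss class in place of the excess loss class and with the density lower bound \textbf{DA($c_0$)} playing the role that the margin assumption played there. First I would write, exactly as in the notation subsection,
\[
R^\lambda(g)-R(g)=\int_K \ell(g,x)\big(\E\hat f_\lambda(x)-f(x)\big)\,dx ,
\]
and invoke the uniform bias bound of Proposition~1 in \cite{comtelacour} (the one recalled inside the proof of Lemma~\ref{bias}): since $f\in\mathcal{H}(s,L)$ and $\mathcal{K}$ is of order $m=\lfloor s\rfloor$, one has $\sup_{x\in\R^d}\big|\E\hat f_\lambda(x)-f(x)\big|\le C\sum_{j=1}^d\lambda_j^{s_j}$. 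Pulling this factor out of the integral gives $\big|R^\lambda(g)-R(g)\big|\le C\big(\sum_{j=1}^d\lambda_j^{s_j}\big)\int_K|\ell(g,x)|\,dx$.

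The second step is to control the Lebesgue integral $\int_K|\ell(g,x)|\,dx$ by the risk $R(g)$. By Cauchy--Schwarz, $\int_K|\ell(g,x)|\,dx\le |K|^{1/2}\big(\int_K\ell(g,x)^2\,dx\big)^{1/2}$; then I would use \textbf{DA($c_0$)}, i.e.\ $f\ge c_0$ on $K$, together with the standing boundedness $\no\ell(g,\cdot)\no_\infty\le M$ (so that $\ell(g,x)^2\le M\,\ell(g,x)$), to write
\[
\int_K\ell(g,x)^2\,dx\le \frac{1}{c_0}\int_K\ell(g,x)^2 f(x)\,dx\le \frac{M}{c_0}\int_K\ell(g,x)f(x)\,dx=\frac{M}{c_0}R(g).
\]
Hence $\int_K|\ell(g,x)|\,dx\le C'\sqrt{R(g)}$, and therefore $\big|R^\lambda(g)-R(g)\big|\le C''\big(\sum_{j=1}^d\lambda_j^{s_j}\big)\sqrt{R(g)}$.

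The last step produces the free parameter $r$ via a termwise Young inequality $ab\le \frac{a^2}{2r}+\frac{r b^2}{2}$: for each $j$ and each $r>0$, $\lambda_j^{s_j}\sqrt{R(g)}\le \frac{1}{2r}\lambda_j^{2s_j}+\frac{r}{2}R(g)$, and summing over $j=1,\dots,d$ gives $\big(\sum_{j=1}^d\lambda_j^{s_j}\big)\sqrt{R(g)}\le \frac{1}{2r}\sum_{j=1}^d\lambda_j^{2s_j}+\frac{dr}{2}R(g)$. Substituting this and rescaling the free parameter to absorb $C''$ and the factor $d/2$ yields $\big|R(g)-R^\lambda(g)\big|\le \frac{C}{r}\sum_{j=1}^d\lambda_j^{2s_j}+rR(g)$ for every $r>0$, with $C$ depending only on $|K|$, $c_0$, $M$ and the kernel/H\"older constants, but not on $r$.

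The only non-routine ingredient is the second step: converting the unweighted integral of $\ell(g,\cdot)$ over $K$ into a quantity controlled by $R(g)=\int_K\ell(g,\cdot)f$ is hopeless without a lower bound on $f$ on $K$, which is exactly why \textbf{DA($c_0$)} is needed here and not in Lemma~\ref{bias} (where \textbf{MA($\kappa$)} controlled the analogous $L_2$-to-excess-risk passage). Everything else — the Comte--Lacour bias bound and Young's inequality — is standard and already appears in the proof of Lemma~\ref{bias}; note moreover that the argument above is written directly in dimension $d$, so no separate bookkeeping for the passage from $d=1$ is required.
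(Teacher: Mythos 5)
Your proof is correct and follows essentially the same route as the paper's: the Comte--Lacour uniform bias bound, then Cauchy--Schwarz combined with \textbf{DA($c_0$)} and the boundedness of $\ell(g,\cdot)$ to pass from $\int_K|\ell(g,x)|\,dx$ to $\sqrt{R(g)}$, and finally Young's inequality to produce the free parameter $r$. Your write-up is in fact slightly more explicit than the paper's (the $|K|^{1/2}$ factor, the use of $\ell^2\leq M\ell$, and the termwise Young step), but the argument is the same.
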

\begin{proof}
We follow the first part of the proof of Lemma \ref{bias} to get:
\beqnn
\left|R^\lambda(g)-R(g)\right|
&\leq&C\sum_{j=1}^d\lambda_j^{s_j}\int_{K}|\ell(g,x)|dx.
\eeqnn
Now using \textbf{DA($c_0$)}, we have, for any $r>0$:
\beqnn
\left|R^\lambda(g)-R(g)\right|
&\leq&C\sum_{j=1}^d\lambda_j^{s_j}\sqrt{\int_{K}|\ell(g,x)|^2dx}\\
&\leq&\frac{C\sum_{j=1}^d\lambda_j^{s_j}}{\sqrt{c_0}}\sqrt{\E_P\ell(g,X)^2}\\
&\leq &C\sum_{j=1}^d\lambda_j^{s_j}\left(R(g)\right)^{\frac{1}{2}}\\
&= &\frac{C}{\sqrt{2r}}\sum_{j=1}^d\lambda_j^{s_j}\left(2rR(g)\right)^{\frac{1}{2}}\\
&\leq & \frac{C}{2r}\sum_{j=1}^d\lambda_j^{2s_j}+rR(g),
\eeqnn
where we use in last line Young's inequality:
$$ 
xy^a\leq ay+x^{1/1-a},\forall a<1,
$$
with $a=\frac{1}{2}$.
\end{proof}
\subsection{Technical lemma for Theorem \ref{thm:application}}

\begin{lemma}
\label{complexitync}
Suppose \textbf{(PRC)}, \textbf{(NA)} and the kernel assumption \textbf{(K1)} are satisfied and $\no X\no_{\infty}\leq M$. Suppose $\E\|\epsilon\|^2<\infty$. Then: 
$$
\mathbb{E} \sup_{(\c,\c^*)\in\mathcal{C}\times\mathcal{M}, \|\mathbf{c}-\mathbf{c}^*\|^2 \leq \delta} {\left|(R_n^\lambda-R^\lambda)(\mathbf{c}^*- \mathbf{c})\right|} \leq C \Pi_{i=1}^d\lambda_i^{-\beta_i}\frac{\sqrt{\delta}}{\sqrt{n}},
$$
where $C>0$ is a positive constant.
\end{lemma}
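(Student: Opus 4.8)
The plan is to deduce this local complexity bound from the modulus-of-continuity estimate for the noisy empirical process that already underlies the control of $\E Z_\lambda(\delta)$ in the proof of Theorem~\ref{mainest} (Lemma~1 in \cite{loustau12}), once two facts specific to the finite-dimensional $k$-means situation have been verified: (a) the deconvolved loss $\c\mapsto\gamma_\lambda(\c,\cdot)$ is Lipschitz from $(\R^{dk},\no\cdot\no)$ into $L_2(\tilde P)$ with Lipschitz constant of order $\prod_{i=1}^d\lambda_i^{-\beta_i}$, and (b) the loss class $\{\gamma(\c,\cdot):\c\in\mathcal{C}\}$, being indexed by a bounded subset of $\R^{dk}$, has logarithmic bracketing entropy, i.e., obeys the complexity condition (\ref{complexity}) ``with $\rho=0$''. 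Throughout one uses that $(R_n^\lambda-R^\lambda)(\c^*-\c)$ is a \emph{centered} empirical process in the deconvolved loss, namely $\frac1n\sum_{i=1}^n\big(\gamma_\lambda(\c^*,Z_i)-\gamma_\lambda(\c,Z_i)\big)-\E_{\tilde P}\big(\gamma_\lambda(\c^*,Z)-\gamma_\lambda(\c,Z)\big)$, so that symmetrization applies directly.

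For (a), I would first record the elementary bound: for $\c,\c'\in\mathcal{B}(0,M)$ and $x\in K$, the inequality $|\min_j a_j-\min_j b_j|\le\max_j|a_j-b_j|$ together with $\big|\no x-c_j\no^2-\no x-c'_j\no^2\big|=\big|\langle 2x-c_j-c'_j,\,c'_j-c_j\rangle\big|$ gives $|\gamma(\c,x)-\gamma(\c',x)|\le C(M,K)\no\c-\c'\no$, whence, integrating over the compact $K$,
\beqnn
\no\gamma(\c,\cdot)-\gamma(\c',\cdot)\no_{L_2(K)} &\le& C\,\no\c-\c'\no,\qquad \sup_{\c\in\mathcal{C}}\no\gamma(\c,\cdot)\no_{L_2(K)}<\infty.
\eeqnn
Since $f$ is continuous and vanishes outside $\{\no x\no_\infty\le M\}$ (by \textbf{(PRC)} and $\no X\no_\infty\le M$), it is bounded, so $\no f*\eta\no_\infty\le\no f\no_\infty<\infty$ and Lemma~\ref{lip}(i) applies, yielding $\no\gamma_\lambda(\c,\cdot)-\gamma_\lambda(\c',\cdot)\no_{L_2(\tilde P)}\le C\prod_{i}\lambda_i^{-\beta_i}\no\c-\c'\no$ for all $\c,\c'\in\mathcal{C}$. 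In particular the $L_2(\tilde P)$-diameter of the deconvolved increments over $\{(\c,\c^*):\no\c-\c^*\no^2\le\delta\}$ is at most $C\prod_i\lambda_i^{-\beta_i}\sqrt\delta$, and a Euclidean $\epsilon/(C\prod_i\lambda_i^{-\beta_i})$-net of the $dk$-dimensional ball $\mathcal{B}(\c^*,\sqrt\delta)$ transfers, through the same bound, into an $L_2(\tilde P)$-net of the loss class at scale $\epsilon$ of cardinality $\le\big(C\prod_i\lambda_i^{-\beta_i}\sqrt\delta/\epsilon\big)^{dk}$; combined with the uniform boundedness of $\gamma(\c,\cdot)$ this is fact (b).

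I would then run the symmetrization-and-chaining argument behind Lemma~1 of \cite{loustau12}, exactly as in the control of $\E Z_\lambda(\delta)$ in the proof of Theorem~\ref{mainest}, the only change being that the entropy is now logarithmic rather than polynomial. Relative to the diameter $D:=C\prod_i\lambda_i^{-\beta_i}\sqrt\delta$ the entropy is of the constant-free form $c_1\log(c_2 D/\epsilon)$, so Dudley's entropy integral $\int_0^D\sqrt{c_1\log(c_2D/\epsilon)}\,d\epsilon=D\int_0^1\sqrt{c_1\log(c_2/u)}\,du$ converges absolutely and is of order $D$; the sub-Gaussian increments of the (deconvolved) Rademacher process are measured in the empirical $L_2$-metric, whose expectation is $n^{-1}\no\gamma_\lambda(\c,\cdot)-\gamma_\lambda(\c',\cdot)\no_{L_2(\tilde P)}^2$, controlled in (a). This gives
\beqnn
\E\sup_{(\c,\c^*)\in\mathcal{C}\times\mathcal{M},\ \no\c-\c^*\no^2\le\delta}\big|(R_n^\lambda-R^\lambda)(\c^*-\c)\big| &\le& C\prod_{i=1}^d\lambda_i^{-\beta_i}\frac{\sqrt\delta}{\sqrt n},
\eeqnn
which is the assertion.

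The main obstacle is to perform the chaining so that only the factor $\prod_i\lambda_i^{-\beta_i}$, and not an extra $\prod_i\lambda_i^{-1/2}$, multiplies $\sqrt\delta/\sqrt n$. The crude uniform bound $|\gamma_\lambda(\c,z)-\gamma_\lambda(\c',z)|\le C\prod_i\lambda_i^{-\beta_i-1/2}\no\c-\c'\no$, which follows from $\no\tfrac1\lambda\mathcal K_\eta(\cdot/\lambda)\no_{L_1(K)}\le C\prod_i\lambda_i^{-\beta_i-1/2}$ as in Lemma~\ref{lip}(ii), is too lossy; one must instead feed the $L_2(\tilde P)$-Lipschitz estimate of Lemma~\ref{lip}(i) into the chaining and check, precisely as for the term $U_\lambda(\delta,t)$ in the proof of Theorem~\ref{mainest}, that the sup-norm quantity $b_\lambda(\delta)\le C\prod_i\lambda_i^{-\beta_i-1/2}$ affects only lower-order remainder terms (it always appears multiplied by $1/\sqrt n$ or $t/n$). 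A minor point to watch is that everything here rests on the restriction of $\gamma_\lambda$ to the compact $K$, which is what makes $\gamma(\c,\cdot)$ bounded and square-integrable there and supplies the factor $\sqrt{|K|}$ in the $L_1(K)$-bound just mentioned.
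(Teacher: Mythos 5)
Your route (Lipschitz parametrization of the deconvolved $k$-means loss in $L_2(\tilde P)$ plus Dudley chaining over the $dk$-dimensional ball) is genuinely different from the paper's, and it has a gap precisely at the point you flag as ``the main obstacle''. Sub-Gaussian chaining for the (symmetrized) empirical process must be carried out in a metric controlled at the level of the sample: the empirical $L_2$ metric, uniform covering numbers, or $L_2(\tilde P)$ \emph{bracketing}. The estimate of Lemma~\ref{lip}(i) only bounds the population $L_2(\tilde P)$ distance between deconvolved losses; it cannot simply be ``fed into'' the chaining, because the increments of the empirical process at a fixed pair $(\mathbf{c},\mathbf{c}')$ are sub-Gaussian with respect to the random empirical distance, not the population one. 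The only pointwise (hence bracketing- or empirical-metric-compatible) Lipschitz bound available is $|\gamma_\lambda(\mathbf{c},z)-\gamma_\lambda(\mathbf{c}',z)|\leq C\Pi_{i=1}^d\lambda_i^{-\beta_i-1/2}\no \mathbf{c}-\mathbf{c}'\no$, since it goes through the $L_1(K)$-norm of $\frac{1}{\lambda}\mathcal{K}_\eta(\cdot/\lambda)$, where the absolute value destroys the Plancherel cancellation exploited in Lemma~\ref{lip}(i). Running bracketing chaining with this constant and with the $L_2(\tilde P)$ diameter $D\asymp \Pi_i\lambda_i^{-\beta_i}\sqrt{\delta}$ produces an entropy integral of order $D\sqrt{\sum_i\log(1/\lambda_i)}$, i.e.\ an extra factor of order $\sqrt{\log n}$ compared with the stated bound. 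Your argument that $b_\lambda(\delta)$ ``affects only lower-order remainder terms'' conflates Bousquet's concentration inequality (where the sup-norm indeed enters at lower order) with the bounding of the expectation itself, which is exactly where the chaining metric enters at first order.

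The paper circumvents this by following Pollard and Levrard: it differentiates $\mathbf{c}\mapsto\gamma_\lambda(\mathbf{c},z)$ at each optimal $\mathbf{c}^*$, writing $\gamma_\lambda(\mathbf{c},z)-\gamma_\lambda(\mathbf{c}^*,z)=\langle \mathbf{c}-\mathbf{c}^*,\nabla_{\mathbf{c}}\gamma_\lambda(\mathbf{c}^*,z)\rangle+\no\mathbf{c}-\mathbf{c}^*\no R_\lambda(\mathbf{c}^*,\mathbf{c}-\mathbf{c}^*,z)$. For the linear term, the supremum over $\no\mathbf{c}-\mathbf{c}^*\no^2\leq\delta$ reduces by linearity to $\sqrt{\delta}$ times a single centered average whose variance is computed exactly by Plancherel (no absolute values, so the constant is $\Pi_i\lambda_i^{-2\beta_i}\delta/n$), and the supremum over the finite set $\mathcal{M}$ costs only a constant through Massart's maximal inequality; this is where the sharp $\Pi_i\lambda_i^{-\beta_i}\sqrt{\delta}/\sqrt{n}$ comes from. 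The remainder term already carries the factor $\sqrt{\delta}$ in front, so a cruder chaining on the class of remainders with envelope $C(\Pi_i\lambda_i^{-\beta_i}+\no z\no)\in L_2(\tilde P)$ suffices --- which is also where the assumption $\E\no\epsilon\no^2<\infty$, never used in your proposal, enters. To repair your direct approach you would have to either accept the extra logarithmic factor or reproduce this linearization step.
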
 
\begin{proof}
The proof follows \citet{levrard} applied to the noisy setting. First note that in the sequel, we need to introduce the following notation:
$$
(\tilde{P}_n-\tilde{P})(\gamma_\lambda(\c,Z)-\gamma_\lambda(\c',Z):=\frac{1}{n}\sum_{i=1}^n\left[\gamma_\lambda(\c,Z_i)-\gamma_\lambda(\c',Z_i)\right]-\E_{\tilde P}\left[\gamma_\lambda(\c,Z)-\gamma_\lambda(\c',Z)\right].
$$
By smoothness assumptions over $\mathbf{c}\mapsto \min \Arrowvert x-c_j\Arrowvert$,
for any $\mathbf{c} \in \mathbb{R}^{dk}$ and $\mathbf{c}^* \in \mathcal{M}$,               we have:
               \[
               \gamma_\lambda(\mathbf{c},z)-\gamma_\lambda(\mathbf{c}^*,z)= \left\langle \mathbf{c}-\mathbf{c}^*,\nabla_{\c}\gamma_\lambda(\mathbf{c}^*,z) \right\rangle + \|\mathbf{c}-\mathbf{c}^*\|R_\lambda(\mathbf{c}^*,\mathbf{c}-\mathbf{c}^*,z),
               \]
               where, with \cite{pollard82} we have:
$$
 \nabla_{\c}\gamma_\lambda(\mathbf{c}^*,z) = -2\left( \int\frac{1}{\lambda}\mathcal{K}_\eta\left(\frac{z-x}{\lambda}\right)(x-c^*_1)\mathbf{1}_{V^*_1}(x)dx,...,\int\frac{1}{\lambda}\mathcal{K}_\eta\left(\frac{z-x}{\lambda}\right)(x-c^*_k)\mathbf{1}_{V^*_k}(x)dx \right) $$
and $R_\lambda(          \mathbf{c}^*,\mathbf{c}-\mathbf{c}^*,z)$ satisfies:
$$
|R_\lambda(\mathbf{c}^*,\mathbf{c}-\mathbf{c}^*,z)|\leq\| \c - \c^* \|^{-1} \left(\left|\left\langle\mathbf{c}-\mathbf{c}^*,\nabla_{\c}\gamma_\lambda(\mathbf{c}^*,z)\right\rangle\right|+\max_{j=1,\ldots k}(|\|z-\mathbf{c}_j\|-\|x-\mathbf{c}^*_j\|\right).
$$
Splitting the expectation in two parts, we obtain:
\beqn
\label{dec}
&&\hspace{-0.9cm}\mathbb{E}  \sup_{\c^* \in \mathcal{M}, \|\mathbf{c}-\mathbf{c}^*\|^2 \leq \delta} {|\tilde{P}_n-\tilde{P}|(\gamma_\lambda(\mathbf{c}^*,.) - \gamma_\lambda(\mathbf{c},.))} \leq 
               \mathbb{E}   \sup_{\c^* \in \mathcal{M}, \|\mathbf{c}-\mathbf{c}^*\|^2 \leq \delta} { |\tilde{P}_n-\tilde{P}| \left\langle \mathbf{c}^*-\mathbf{c}, \nabla_{\c}\gamma_\lambda(\mathbf{c}^*,.) \right\rangle  }   \nonumber\\
               &+& \sqrt{\delta}  \mathbb{E} \sup_{\c^* \in \mathcal{M}, \|\mathbf{c}-\mathbf{c}^*\|^2 \leq \delta} {|\tilde{P}_n-\tilde{P}|(-R_\lambda(\mathbf{c}^*,\mathbf{c}-\mathbf{c}^*,. ))}    
\eeqn
 To bound the first term in this decomposition, consider the random variable
 \beqnn
 Z_n=(\tilde{P}_n-\tilde{P}) \left\langle \mathbf{c}^*-\mathbf{c}, \nabla_{\c}\gamma_\lambda(\mathbf{c}^*,.) \right\rangle =\frac{2}{n}\sum_{u=1}^k\sum_{j=1}^d(c_{u,j}-c^*_{u,j})\sum_{i=1}^n\int_{V_u}\frac{1}{\lambda}\mathcal{K}_\eta\left(\frac{Z_i-x}{\lambda}\right)(x_j-c_{u,j})dx.
 \eeqnn
By a simple Hoeffding's inequality, $Z_n$ is a subgaussian random variable. Its variance can be bounded as follows:
\beqnn
\mathrm{var} Z_n&=&\frac{4}{n}\sum_{u=1}^k\sum_{j=1}^d(c_{u,j}-c^*_{u,j})^2\mathrm{var}\int_{V_u}\frac{1}{\lambda}\mathcal{K}_\eta\left(\frac{Z-x}{\lambda}\right)(x_j-c_{u,j})dx\\
&\leq &\frac{4}{n}\delta \E\left(\int_{V_{u^+}}\frac{1}{\lambda}\mathcal{K}_\eta\left(\frac{Z-x}{\lambda}\right)(x_j-c_{{u^+},j})dx\right)^2\\
&\leq &C\frac{4}{n}\delta \int\left|\mathcal{F}\left[\frac{1}{\lambda}\mathcal{K}_\eta\left(\frac{\cdot}{\lambda}\right)\right](t)\right|^2\left|\mathcal{F}[(\pi_j-c_{{u^+},j})\mathrm{1}_{V_{u^+}}](t)\right|^2dt\\
&\leq &C\frac{4}{n}\delta\Pi_{i=1}^d\lambda_i^{-2\beta_i} \int_{V_{u^+}}(x_j-c_{u^+,j})^2dx\\
&\leq & C\Pi_{i=1}^d\lambda_i^{-2\beta_i}\frac{4}{n}\delta,
\eeqnn
where ${u^+}=\arg\max_u\int_{V_u}\frac{1}{\lambda}\mathcal{K}_\eta\left(\frac{Z-x}{\lambda}\right)(x_j-c_{u,j})dx$ and $\pi_j:x\mapsto x_j$, and where we use the same argument as in Lemma \ref{lip} under assumption \textbf{(K1)}. We hence have using for instance a maximal inequality due to Massart \citet[Part 6.1]{saintflour}:
\beqnn
               \mathbb{E} \left (  \sup_{\c^* \in \mathcal{M}, \|\mathbf{c}-\mathbf{c}^*\|^2 \leq \delta} { (\tilde{P}_n-\tilde{P}) \left\langle \mathbf{c}^*-\mathbf{c}, \nabla_{\c}\gamma_\lambda(\mathbf{c}^*,.) \right\rangle  } \right ) 
                \leq
               C\frac{\Pi_{i=1}^d\lambda_i^{-\beta_i}}{\sqrt{n}}\sqrt{\delta}.
\eeqnn
                We obtain for the first term in (\ref{dec}) the right order. To prove that the second term in (\ref{dec}) is smaller, note that from \cite{pollard82}, we have:
\beqnn
|R_\lambda(\mathbf{c}^*,\mathbf{c}-\mathbf{c}^*,z)|&\leq&\| \c - \c^* \|^{-1} \left(\left\langle\mathbf{c}-\mathbf{c}^*,\nabla_{\c}\gamma_\lambda(\mathbf{c}^*,z)\right\rangle+\max_{j=1,\ldots k}(|\|z-\mathbf{c}_j\|^2-\|z-\mathbf{c}_j^*\|^2|\right)\\
&\leq &  \|\nabla_{\c}\gamma_\lambda(\mathbf{c}^*,z)\|+\| \c - \c^* \|^{-1}\sum_{j=1,\ldots k}|\|z-\mathbf{c}_j\|^2-\|z-\mathbf{c}_j^*\|^2|\\
&\leq &  C(\Pi_{i=1}^d\lambda_i^{-\beta_i}+\no z\no)
\eeqnn
we we use in last line:
$$
\|\nabla_{\c}\gamma_\lambda(\mathbf{c}^*,z)\|^2=4\sum_{j,k}\left(\int\frac{1}{\lambda}\mathcal{K}_\eta\left(\frac{z-x}{\lambda}\right)(x_j-c^*_{u,j})\mathbf{1}_{V^*_u}(x)dx\right)^2\leq C\Pi_{i=1}^d\lambda_i^{-2\beta_i}.
$$ 
Hence it is possible to apply a chaining argument as in \citet{levrard} to the class
$$
\mathcal{F}=\{R_\lambda(\mathbf{c}^*,\mathbf{c}-\mathbf{c}^*,\cdot),\mathbf{c}^*\in\mathcal{M},\c\in\R^{kd}:\| \c-\c^*\|\leq\sqrt{\delta}\},
$$
which has an enveloppe function $F(\cdot)\leq C(\Pi_{i=1}^d\lambda_i^{-\beta_i}+\| \cdot\|)\in L_2(\tilde{P})$ provided that $\E\|\epsilon\|^2<\infty$. We arrive at the conclusion.
\end{proof}
\bibliographystyle{plain}
         \bibliography{referencejmlr}
\end{document}